
\documentclass{amsart}

\usepackage{amssymb}
\usepackage{amsmath}

\newtheorem{theorem}{Theorem}[section]
\newtheorem{proposition}[theorem]{Proposition}
\newtheorem{corollary}[theorem]{Corollary}
\newtheorem{lemma}[theorem]{Lemma}


\theoremstyle{definition}
\newtheorem{definition}[theorem]{Definition}
\newtheorem{example}[theorem]{Example}

\theoremstyle{remark}
\newtheorem{remark}[theorem]{Remark}

\numberwithin{equation}{section}


\newcommand{\dist}{\ensuremath{\mathrm{dist} }}

\newcommand{\tub}{\ensuremath{\mathrm{Tub} }}
\newcommand{\F}{\ensuremath{\mathcal{F}}}
\newcommand{\singularF}{\ensuremath{\mathcal{X}_{F}}}

\newcommand{\dank}{\textsf{Acknowledgments.\ }}



\newcommand{\metric}{\ensuremath{ \mathtt{g} }}

\newcommand{\afir}{\textsf{Claim}\, }

\begin{document}


\title[On closed geodesics in $M/\F$]{On closed geodesics in  the leaf spaces of singular Riemannian foliations}

\author{Marcos M. Alexandrino}

\author{Miguel Angel Javaloyes }


\address{Marcos M. Alexandrino \hfill\break\indent 
Instituto de Matem\'{a}tica e Estat\'{\i}stica\\
Universidade de S\~{a}o Paulo, \hfill\break\indent
 Rua do Mat\~{a}o 1010,05508 090 S\~{a}o Paulo, Brazil}
\email{marcosmalex@yahoo.de, malex@ime.usp.br}

\address{Miguel Angel Javaloyes \\ 
Departamento de Geometr\'{\i}a y Topolog\'{\i}a.\hfill\break\indent
Facultad de Ciencias, Universidad de Granada.\hfill\break\indent
Campus Fuentenueva s/n, 18071 Granada, Spain}
\email{majava@ugr.es, ma.javaloyes@gmail.com}

\thanks{The first author was  supported by CNPq and partially supported by FAPESP. The second author was partially supported by Regional J.
Andaluc\'{\i}a Grant P06-FQM-01951 and by Spanish MEC Grant MTM2007-64504.}

\subjclass[2000]{Primary 53C12, Secondary 57R30}





\keywords{closed  geodesics, Riemannian orbifolds, Riemannian foliations, polar actions  and variationally complete actions}

\begin{abstract}

In this paper  we survey on some recent results on Riemannian orbifolds and singular Riemannian foliations and combine them to conclude the existence of closed  geodesics in the leaf space of some classes of singular Riemannian foliations (s.r.f.), namely s.r.f. that  admit sections or   have no horizontal conjugate points. We also investigate the shortening process with respect to Riemannian foliations. 



\end{abstract}


\maketitle

\section{Introduction}

In \cite[Theorem 5.1.1, Remark 5.1.2]{Haefliger} Guruprasad and Haefliger proved the existence of closed  geodesics in Riemannian compact orbifolds $Q$ (recall Definition \ref{definition-Riemannian-orbifold} and Definition \ref{definition-ClosedGeodesicOrbifold}) in the following cases:
\begin{enumerate}
\item[(1)] $Q$ is not developable (not good orbifold), 
\item[(2)] $Q$ is a good orbifold $\Sigma/W$ and $W$  has an element of infinite order or is finite.
\end{enumerate}

The aim of this paper is twofold. The first one is to survey on
 some recent results on Riemannian orbifolds and singular Riemannian foliations and combine them to conclude the existence of closed  geodesics in the leaf space of some  classes of singular Riemannian foliations.  
We start by recalling some concepts of orbifold theory and   by giving    an alternative  proof of item (2) of Guruprasad and Haefliger's theorem (see Theorem \ref{theorem-ClosedGeodesicGoodOrbifold}). Then we recall some facts about singular Riemannian foliations and,
  using the result of Guruprasad and Haefliger \cite{Haefliger},   Alekseevsky \emph{et al}.~   \cite{Alekseevsky} and Lytchak \cite{Lytchak},  conclude the existence of closed geodesics in $M/\F$ when $\F$ is a singular Riemannian foliation with closed embedded leaves on a simply connected complete Riemannian manifold $M$ and  $M/\F$ is a compact orbifold (see Theorem \ref{theorem-geodesicafechadaM/F}). In particular, we show the existence of  a closed geodesic of the orbifold $M/\F$ when $\F$ admits sections (e.g., the partition by orbits of a polar action) or $\F$ has no horizontal conjugate points (e.g., the  partition by orbits of  a variationally complete action),  $M/\F$ is compact and $M$ is simply connected (see Corollary \ref{corollary-s.r.f.s-ouno  horizontal conjugate points-geodesicafechada}).
 
The second aim of this paper is to  construct  the shortening process with respect to Riemannian foliations (see Section 3 and Theorem \ref{teo-RF-PeriodicGeodesic}). This 
 provides an algorithm to find closed geodesics in  some special Riemannian  orbifolds.
 With this  technique  we prove that every compact orbifold with nontrivial (topological) fundamental group admits  a closed geodesic (see Corollary \ref{corollary-orbifoldComapcto-grupofundamentalnaotrivial}). In particular  $M/\F$ admits  a closed geodesic if the fundamental group of the compact topological space $|M/\F|$ is nontrivial and $\F$ is a s.r.f.  that admits sections or $\F$ has no horizontal conjugate points (see Corollary \ref{corollary-GeodFechadaEspfolhasNaoSimplesmenteConexo}).

This paper is organized as follows.
In Section 2 we present and prove   Theorem \ref{theorem-ClosedGeodesicGoodOrbifold}, Theorem \ref{theorem-geodesicafechadaM/F} and Corollary \ref{corollary-s.r.f.s-ouno  horizontal conjugate points-geodesicafechada}.
In Section 3 we present the  shortening process with respect to Riemannian foliations 
and prove Theorem \ref{teo-RF-PeriodicGeodesic}  and Corollaries \ref{corollary-orbifoldComapcto-grupofundamentalnaotrivial} and \ref{corollary-GeodFechadaEspfolhasNaoSimplesmenteConexo}.
We also  include an appendix where we use the shortening process to give another proof of Theorem \ref{theorem-ClosedGeodesicGoodOrbifold}.

\dank 
The authors are grateful to Dr. Alexander Lytchak for very useful suggestions.




\section{Orbifolds and Riemannian foliations}

In this section, we  recall some definitions and results, state  
Theorem \ref{theorem-ClosedGeodesicGoodOrbifold}, Theorem \ref{theorem-geodesicafechadaM/F} and Corollary \ref{corollary-s.r.f.s-ouno  horizontal conjugate points-geodesicafechada} and give concise proofs for these results.

We start by recalling some facts about orbifolds. More details can be found in  Salem \cite[Appendix D]{Molino}, 
in Guruprasad and Haefliger \cite{Haefliger} or in  Moerdijk and  Mr\v{c}un \cite{Moerdijk}.



\begin{definition}[Riemannian pseudogroup]
Let $\Sigma$ be a Riemannian manifold, not necessarily connected. A \emph{pseudogroup} $W$ of isometries of $\Sigma$ is a collection of local isometries $w:U\rightarrow V,$ where $U$ and $V$ are open subsets of $\Sigma$ such that:
\begin{enumerate}
\item If $w\in W$ then $w^{-1}\in W.$
\item If $w:U\rightarrow V$ and $\tilde{w}:\tilde{U}\rightarrow\tilde{V}$ belong to $W,$ then 
$\tilde{w}\circ w:w^{-1}(\tilde{U})\rightarrow \tilde{w}(w^{-1}(\tilde{U}))\subset\tilde{V}$ also belongs to $W,$ if $V\cap \tilde{U}\neq\emptyset$.
\item If $w:U\rightarrow V$ belongs to $W,$ then its restriction to each open subset $\tilde{U}\subset U$ also belongs to $W.$
\item If $w:U\rightarrow V$ is an isometry between open subsets of $\Sigma$ which coincides in a neighborhood of each point of $U$ with an element of $W,$ then $w\in W.$
\end{enumerate} 
\end{definition}

\begin{definition}
Let $A$ be a family of local isometries of $\Sigma$ containing the identity map of $\Sigma.$ 
The pseudogroup obtained by taking the inverses of the elements of $A,$ the restrictions to open sets of elements of $A,$ as well as their compositions and their unions, is called the {\it pseudogroup generated by $A.$}
\end{definition}

An important example of a Riemannian pseudogroup is the holonomy pseudogroup of a Riemannian foliation, whose definition we now recall.
\begin{definition}
\label{example-holohomypseudogroup}
 Let $\F$ be a    foliation of codimension $k$ on a Riemannian manifold $(M,\metric).$ Then $\F$ is a \emph{Riemannian foliation} if it can be  described by an open cover $\{U_{i}\}$ of $M$ with Riemannian submersions $f_{i}:(U_{i},\metric)\rightarrow(\sigma_{i},b)$ (where $\sigma_{i}$ is a submanifold of dimension $k$) such that there are isometries $w_{i,j}:f_{i}(U_{i} \cap U_{j})\rightarrow f_{j}(U_{j}\cap U_{i})$ with $f_{j}=w_{i,j}\circ f_{i}.$ The elements $w_{i,j}$ acting on $\Sigma=\amalg \sigma_{i}$ generate a pseudogroup of isometries of $\Sigma$  called the \emph{holonomy pseudogroup of $\F$}.
\end{definition}

\begin{definition}[Riemannian orbifold]
\label{definition-Riemannian-orbifold}
One can define a $k$-dimensional Riemannian orbifold as an equivalence class of pseudogroups $W$ of isometries on a Riemannian manifold $\Sigma$ (dimension of $\Sigma$ is equal to $k$) verifying the following conditions:
\begin{enumerate}
\item The space of orbits $|\Sigma/ W|$ is Hausdorff.
\item For each $x\in\Sigma,$ there exists an open neighborhood $U$ of $x$ in $\Sigma$ such that the restriction of $W$ to $U$ is generated by a finite group of isometries of $U.$
\end{enumerate}
In addition, if $W$ is a discrete subgroup of isometries of $\Sigma$ whose action on $\Sigma$ is proper,  then $\Sigma/W$ is said  \emph{good} (for definition and properties of proper actions see e.g. Duistermaat and Kolk \cite{Duistermaat}).
\end{definition}

\begin{remark}
Let $\Sigma/W$ be a Riemannian good orbifold. Since the action $W\times\Sigma\rightarrow\Sigma$ is proper, one can conclude that $W$ is a closed subgroup of isometries of $\Sigma$ with discrete topology and the isotropy group $W_{x}$ is finite for each $x\in\Sigma$. 
\end{remark}
\begin{remark}\label{closedembedded}
An important example of a Riemannian orbifold is the space of leaves $M/\F$, where $M$ is a Riemannian manifold and $\F$ is a Riemannian foliation on $M$ with closed embedded leaves. In fact $M/\F$ turns out to be isomorphic to $\Sigma/W$, where  $\Sigma$ and $W$ were  presented in Definition \ref{example-holohomypseudogroup}. This is proved in   Molino \cite[Proposition 3.7]{Molino} when the leaves are compact. In order to prove  the case where the leaves are closed  and embedded,   it suffices to  generalize Lemma 3.7  in \cite{Molino} (using e.g. Claim 1 of Proposition 2.18 \cite{Alex6}).
The proof, mutatis mutandis, now follows the proof of Molino.
\end{remark}

\begin{remark}
\label{remark-superResumoAntigoApendice}
There exists  a reciprocal result, namely  each   Riemannian  orbifold $\Sigma/W$ is the space of leaves of a Riemannian foliation with compact leaves. In fact    Moerdijk and Mr\v{c}un \cite[Proposition 2.23]{Moerdijk} proved that if $U(E)$ is the unitary frame bundle of the complexification of the tangent bundle of $\Sigma$, then $U(E)/W$ admits a foliation $\F^{u}$ such that $\Sigma/W$ is the orbifold  $(U(E)/W)/\F^{u}$. 
Using the Riemannian conection of $\Sigma$ one can induce a distribution in $U(E)$ and in $U(E)/W$ and then find the  appropriate transverse metric such that the plaques of $\F^{u}$ can be described by local submersions.
\end{remark}

Given a pseudogroup, and in particular an orbifold, we can define a fundamental group as we now recall.

\begin{definition}
\label{definition-wloop}
A \emph{$W$-loop} with base point $x_{0}\in\Sigma$ is defined by    
\begin{enumerate}
\item a sequence $0=t_{0}<\cdots<t_{n}=1,$    
\item continuous paths $c_{i}:[t_{i-1},t_{i}]\rightarrow \Sigma,$ $1\leq i\leq n,$  
\item elements $w_{i}\in W$ defined in a neighborhood of $c_{i}(t_{i})$ for $1\leq i \leq n$ such that $c_{1}(0)=w_{n}c_{n}(1)=x_{0}$ and $w_{i}c_{i}(t_{i})=c_{i+1}(t_{i}),$ where $1\leq i\leq n-1.$
\end{enumerate}
A \emph{subdivision} of  a $W$-loop is obtained by adding new points to the interval $[0,1],$ by taking the restriction of the $c_{i}$ to these new intervals and $w=id$ at the new points.
\end{definition}


\begin{definition}
\label{definition-equivalenceWloops}
Two $W$-loops are \emph{equivalent} if there exists a subdivision common to the loops represented by $(c_{i},w_{i})$ and $(\tilde{c}_{i},\tilde{w}_{i})$ and elements $g_{i}\in W$ 
defined in a neighborhood of the path $c_{i}$ such that
\begin{enumerate}
\item  $g_{i}\circ c_{i}=\tilde{c}_{i},$ $1\leq i\leq n$,
\item $\tilde{w}_{i}\circ g_{i}$ and $g_{i+1}\circ w_{i}$ have the same germ at $c_{i}(t_{i}),$ $1\leq i\leq n-1,$
\item $\tilde{w}_{n}\circ g_{n}$ has the same germ at $c_{n}(1)$ as $w_{n}.$
\end{enumerate}
\end{definition}

\begin{definition}
A \emph{deformation} of a $W$-loop represented by $(c_{i},w_{i})$ is given by continuous deformations $c_{i}(s,\cdot)$ of the paths $c_{i}=c_{i}^{0}:[t_{i-1},t_{i}]\rightarrow \Sigma,$ such that $(c_{i}(s,\cdot),w_{i})$ represents a $W$-loop.
\end{definition}

\begin{definition}
Two $W$-loops are in the same \emph{homotopy class} if one can be obtained from the other by a series of subdivisions, equivalences and deformations.
The homotopy classes of $W$-loops based at $x_{0}\in \Sigma$ form a group $\pi_{1}(W,x_{0})$ called \emph{fundamental group of the pseudogroup} $W$ at the point $x_{0}.$
\end{definition}

\begin{remark}
If the orbit space $\Sigma/W$ is connected, then there exists an isomorphism, defined up to conjugation, between $\pi_{1}(W,x)$ and $\pi_{1}(W,y)$ for $x,y$ in $\Sigma.$ Thus we will write just $\pi_1(W)$ when convenient.
\end{remark}

\begin{definition}
If  $\Sigma/W$ is a connected orbifold, its {\it fundamental group}  $\pi(\Sigma/W)$  is defined as the fundamental group $\pi_{1}(W)$ of the pseudogroup $W$.
\end{definition}

\begin{remark}
The fundamental group of an orbifold $\Sigma/W$ is not the same as the fundamental group of the topological space $|\Sigma/W|$. One of the differences lies in item (2) of Definition \ref{definition-equivalenceWloops}. To understand this claim, consider $W$ the group generated by the reflection in the line $\{x=0\}$ in $\mathbb{R}^{2}$. We note that the line that joins $(-1,0)$ to ($1,0)$ is a nontrivial element of the fundamental group of the orbifold $\mathbb{R}^{2}/W$. If we would drop the words \emph{have the same germ at} in item (2) of Definition \ref{definition-equivalenceWloops} we would conclude that this curve is equivalent to the concatenation of the line that joins $(-1,0)$ to $(0,0)$ with the line that joins $(0,0)$ to $(-1,0)$. This curve is cleary homotopic to a point and hence the curve that joins  $(-1,0)$ to ($1,0)$ would be equivalent to a point.  
\end{remark}


\begin{definition}[Closed geodesics in a Riemannian orbifold]
\label{definition-ClosedGeodesicOrbifold}
Let $\Sigma/W$ be a Riemannian orbifold. A closed geodesic in $\Sigma/W$ is defined as:
\begin{enumerate}
\item a sequence $0=t_{0}<\ldots<t_{n}=1,$    
\item nontrivial segments of geodesics  $\gamma_{i}:[t_{i-1},t_{i}]\rightarrow \Sigma,$ $1\leq i\leq n,$  
\item elements $w_{i}\in W$ defined in a neighborhood of $\gamma_{i}(t_{i})$ for $1\leq i \leq n$ such that $\gamma_{1}(0)=w_{n}\gamma_{n}(1)$, $\gamma_{1}'(0)=d w_{n}\gamma_{n}'(1)$, $w_{i}\gamma_{i}(t_{i})=\gamma_{i+1}(t_{i}),$ $d w_{i}\gamma_{i}'(t_{i})=\gamma_{i+1}'(t_{i}),$ 
where $1\leq i\leq n-1.$
\end{enumerate}
Usually  a closed geodesic in $\Sigma/W$ is denoted by $(\gamma_{i},w_{i}).$ 
\end{definition}

\begin{remark}
\label{remark-geodesica-orbifoldConexo}

In order to prove the existence of closed geodesics in each compact Riemannian good orbifold, it suffices to prove the existence of closed geodesics in each compact Riemannian good orbifold $\Sigma/W$, where $\Sigma$ is a complete connected Riemannian manifold and $W$ has infinite cardinality. 
In fact, let $\{\Sigma_{i}\}$ be the connected components of $\Sigma$ and $\{W_{i}\}$ the subgroups of $W$ that send each $\Sigma_{i}$ onto $\Sigma_{i}.$  
First assume that each $W_{i}$ has finite cardinality. Then the fact that $\Sigma/W$ is compact implies that each $\Sigma_{i}$ is compact. Indeed, consider a cover of $\Sigma=\cup \Sigma_{i}$ by normal balls $B_{r}(p_{\alpha})$. Since $\pi:\Sigma\rightarrow \Sigma/W$ is an open map, we have that $\{\pi(B_{r}(p_{\alpha}))\}$ is an open covering of the compact set $\Sigma/W.$ Therefore we can find a finite cover $\{\pi(B_{r}(p_{j}))\}$. The assumption that  the cardinality of each $W_{i}$ is finite implies that each $\Sigma_{i}$ is covered by a finite number of closed bounded sets $w \overline{B_{r}(p_{j})}$ and hence each $\Sigma_{i}$ is compact. In this case the existence of a closed geodesic in $\Sigma/W$ follows from the existence of a closed geodesic in the compact manifold $\Sigma_{i}.$ 
Now we assume that there exists a connected component $\Sigma_{i_{0}}$ and a subgroup $W_{i_{0}}$ with infinite cardinality.  As $\Sigma/W$ is compact, it follows that  $\Sigma_{i_{0}}/W_{i_{0}}$ is also compact and $\Sigma_{i_0}$ is complete. 
Finally note that the existence of  a nontrivial closed geodesic in $\Sigma_{i_{0}}/W_{i_{0}}$ implies directly the existence of a closed geodesic in $\Sigma/W.$  
\end{remark}

\begin{theorem}
\label{theorem-ClosedGeodesicGoodOrbifold}
Let $\Sigma$ be a connected complete Riemannian manifold and $W$ be an infinite discrete subgroup of isometries of $\Sigma$ whose action on $\Sigma$ is proper and such  that the good orbifold $\Sigma/W$ is compact.  Assume that there exists an element $w^{0}\in W$ that does not fix points (e.g. $w^{0}$ has infinite order).
Then there exists a nontrivial closed geodesic in the Riemannian good orbifold $\Sigma/W$. 

\end{theorem}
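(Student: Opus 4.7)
The plan is to realize the closed geodesic as a length-minimizer within the ``free homotopy class'' determined by the conjugacy class $C:=\{gw^{0}g^{-1}:g\in W\}\subset W$; conjugation preserves the no-fixed-point property, so every $\tilde w\in C$ acts on $\Sigma$ without fixed points. I would set
\[
L_{0}:=\inf\Bigl\{\mathrm{length}(c)\;\Big|\;c:[0,1]\to\Sigma\text{ piecewise }C^{1},\ c(1)=\tilde w\,c(0)\text{ for some }\tilde w\in C\Bigr\}
\]
and proceed in three steps: positivity of $L_{0}$, attainment of $L_{0}$, and verification of the orbifold-closed-geodesic conditions of Definition~\ref{definition-ClosedGeodesicOrbifold}.

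For positivity, suppose there existed $(c_{n},\tilde w_{n})$ in the admissible class with $\mathrm{length}(c_{n})\to 0$. Since $\Sigma/W$ is compact, fix a compact set $K\subset\Sigma$ surjecting onto $\Sigma/W$, and for each $n$ choose $h_{n}\in W$ with $h_{n}c_{n}(0)\in K$. Replace $(c_{n},\tilde w_{n})$ by $(h_{n}\circ c_{n},\,h_{n}\tilde w_{n}h_{n}^{-1})$; the resulting pair still lies in the admissible class. For $n$ large the endpoints sit in the compact $1$-neighborhood $K'$ of $K$, and properness of the $W$-action forces only finitely many elements of $W$ to carry a point of $K$ into $K'$. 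Passing to a subsequence we may assume $h_{n}\tilde w_{n}h_{n}^{-1}=\tilde w\in C$ is constant and $h_{n}c_{n}(0)\to x\in K$; then $\mathrm{length}(c_{n})\to 0$ gives $\tilde w\,x=x$, contradicting the absence of fixed points in $C$.

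The same conjugation trick attains $L_{0}$: take constant-speed minimizers $c_{n}$, conjugate so that $c_{n}(0)\in K$, and use properness on the compact set $\{y\in\Sigma:\dist(y,K)\le L_{0}+1\}$ to deduce, along a subsequence, that $\tilde w_{n}=\tilde w\in C$ is constant. The curves are uniformly Lipschitz, so Arzel\`a--Ascoli produces a uniform limit $c:[0,1]\to\Sigma$ with $c(1)=\tilde w\,c(0)$, and lower semicontinuity of length gives $\mathrm{length}(c)=L_{0}$. A standard fixed-endpoint first variation shows $c$ is a smooth geodesic; the free-endpoint variation $c_{s}(0):=\exp_{c(0)}(sv)$, $c_{s}(1):=\tilde w\,c_{s}(0)$, combined with the fact that $\tilde w$ is an isometry, yields via the first variation formula $\langle v,(d\tilde w)^{-1}c'(1)-c'(0)\rangle=0$ for every $v\in T_{c(0)}\Sigma$, hence $c'(1)=d\tilde w\cdot c'(0)$. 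Setting $\gamma_{1}:=c$ and $w_{1}:=\tilde w^{-1}$ then produces, in the sense of Definition~\ref{definition-ClosedGeodesicOrbifold} with $n=1$, a closed geodesic of $\Sigma/W$; it is nontrivial because $L_{0}>0$ prevents $c$ from being constant.

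The main obstacle I foresee is precisely this compactness argument: without pushing the curves into a fixed compact region by the $W$-action, a minimizing sequence could escape to infinity and the length of any limiting curve could drop strictly below $L_{0}$. The hypothesis that $w^{0}$ (hence every $\tilde w\in C$) has no fixed point is exactly what both guarantees $L_{0}>0$ and prevents the conjugate elements $h_{n}\tilde w_{n}h_{n}^{-1}$ from accumulating at the identity of $W$.
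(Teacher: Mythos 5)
Your proof is correct and follows essentially the paper's route: minimize the translation length of $w^{0}$ (equivalently, of its conjugacy class) over $\Sigma$, using properness and discreteness of $W$ together with the no-fixed-point hypothesis to get both positivity and attainment of the infimum. The paper minimizes the displacement function $y\mapsto d(y,w^{0}y)$ directly and verifies smoothness of the concatenated curve by a triangle-inequality argument, whereas you reach the same conclusion via Arzel\`a--Ascoli and a free-endpoint first-variation computation, but the core mechanism is identical.
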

\begin{proof}
Let us first proof that   $\inf_{y\in \Sigma}{\rm d}(y,w^{0} y)>0$. Assume on the contrary that there exists a sequence $\{x_n\}$ in $\Sigma$ such that $\lim_{n\to\infty}{\rm d}(x_n,w^0x_n)$ goes to zero. As $\Sigma/W$ is compact, there exists a sequence $\{g_k\}$ in $W$ and $y\in\Sigma$ such that 
\[
\begin{array}{ccc}
\displaystyle
\lim_{k\to\infty} g_k x_k=y&\text{and}&
\displaystyle\lim_{k\to\infty} g_k w^0x_k= y
\end{array}
\]
up to subsequences. As the second limit coincides with $\lim_{k\to\infty} g_k w^0g^{-1}_kg_kx_k=y$ and the action of $W$ in $\Sigma$ is proper, we obtain that $g_k w^0 g^{-1}_k\to g\in W_y$, and  being the action $W$ discrete, it follows that there exists $k_0\in\mathbb{N}$ such that $g=g_{k_0}w^0g^{-1}_{k_0}$.  Thus, $g_{k_0}w^0g^{-1}_{k_0}y=y$, but in this case $w^0$ fixes $g^{-1}_{k_0}y$ contradicting our hypothesis. Once we have that $\inf_{y\in\Sigma} {\rm d}(y,w^0 y)>0$, a similar argument proves that the infimum is attained in some point $x\in\Sigma$. Let $\gamma:[0,1]\rightarrow \Sigma$ be the geodesic minimizing the distance from $x$ to $w^0x$, which exists because $\Sigma$ is complete. If we prove that $\tilde{\gamma}:[0,2]\rightarrow \Sigma$ given by
\[\tilde{\gamma}(t)=\begin{cases}
\gamma(t) &\text{if $t\in[0,1]$}\\
w^0\gamma(t-1)&\text{if $t\in(1,2]$}
\end{cases}\]
is a smooth geodesic, then its projection in $\Sigma/W$ will be a closed geodesic. Let $x'=\gamma(t')$ with $t'\in(0,1)$. Then
${\rm d}(x',w^0x')\leq {\rm d}(x',w^0x)+{\rm d}(w^0x,w^0x')={\rm d}(x',w^0x)+{\rm d}(x,x')={\rm d}(x,w^0x).$
As ${\rm d}(x,w^0x)$ attains the minimum of the translation length of $w^0$, the last inequalities must be in fact equalities and $\tilde{\gamma}$ must be smooth in $t=1$. As it is smooth  in the rest of points, we finally conclude that its projection is a closed geodesic of the orbifold $\Sigma/W$.

\end{proof}


In what follows we give a simple but important example that  illustrates the above theorem. It also allows us to see the difference between the classical problem of existence of closed geodesics in compact manifolds and the problem of existence of closed geodesics in compact good orbifolds.

\begin{example}
\label{example-theorem-ClosedGeodesicGoodOrbifold}
Let $\Sigma$ be the Euclidean space $\mathbb{R}^{n}$ and $W$ be an infinite Coxeter group of isometries of $\mathbb{R}^{n}$, i.e., the subgroup of isometries $W$ is generated by reflections in hyperplanes of  a family $\mathcal{H}$, the topology induced in $W$ from the group of isometries of $\mathbb{R}^{n}$ is discrete and the action on $\mathbb{R}^{n}$ is proper. Assume that $\mathcal{H}$  is invariant by the action of $W$.
Also assume that $W$ is irreducible and $\mathbb{R}^{n}/W$ is compact.  
It is known  that $W$ is an affine Weyl group, i.e., a semidirect product of a Weyl group and a group of translations (see Bourbaki \cite{Bou} Ch.~VI \S2 Proposition~8 and 
Remarque~1 on~p.180]), in particular $W$ has an element that does not fix points and hence satisfies item (b) of the theorem. It is also known that $\mathbb{R}^{n}/W$ is a simplex and hence a contractible space (see Bourbaki \cite{Bou}
Ch.~V \S3 Propositions~6,7, 8, and~10, and Remarque~1 on~p.86).  Therefore a compact good orbifold can be contractible. This does not happen with compact manifolds that always admit a nontrivial homotopy group. This topological property plays a fundamental role in the proof of Lyusternik and Fet about the existence of closed geodesics in compact manifolds (see Jost \cite{Jost}). It is also interesting to note that the fundamental group of the topological space $|\mathbb{R}^{n}/W|$ is trivial, but the fundamental group of the orbifold $\mathbb{R}^{n}/W$ is $W$, since $\mathbb{R}^{n}$ is simply connected (for  proofs see   Bridson and  Haefliger \cite[page 608]{BridsonHaefliger}). 

\end{example}

\begin{remark}
\label{remark-ExemplosPatologicos}
It is interesting to note that if there  existed an infinite torsion  finitely presented group,  then it would be possible to construct an example of a compact Riemannian good orbifold $\Sigma/W$ (that is not a manifold)  so that $W$  would not  necessarily satisfy the condition of Theorem \ref{theorem-ClosedGeodesicGoodOrbifold}. Nevertheless, as far  as the  authors know, the existence of such kind of group remains an open problem.  
\end{remark}

Closed geodesics in Riemannian orbifolds are related to horizontal periodic geo\-de\-sics of  Riemannian foliations  as we now explain.


\begin{definition}
\label{definition-folheacao}
 A partition $\F$ of a complete Riemannian manifold $M$ by connected immersed submanifolds (the \emph{leaves}) without self intersections   is said
\begin{enumerate}
\item[(1)]  a {\it  singular foliation},
if the module $\singularF$ of smooth vector fields on $M$ that are tangent at each point to the corresponding leaf acts transitively on each leaf. In other words, for each leaf $L$ and each $v\in TL$ with footpoint $p,$ there is $X\in \singularF$ with $X(p)=v$;
\item[(2)] a {\it  singular Riemannian foliation}, if it satisfies $(1)$ and it is \emph{transnormal}, i.e., every geodesic that is perpendicular at one point to a leaf remains perpendicular to every leaf that meets.
\end{enumerate}
\end{definition}
\begin{remark}
Let $\F$ be a singular Riemannian foliation. A leaf $L$ of $\F$ (and each point in $L$) is called \emph{regular} if the dimension of $L$ is maximal, otherwise $L$ is called {\it singular}. If all the leaves of $\F$ have the same codimension $k$, then $\F$ turns out to be a  Riemannian foliation of codimension $k$.
 \end{remark}
Typical examples of (singular) Riemannian foliations are the partition by orbits of an isometric action, by leaf closures of a Riemannian foliation (see \cite{Molino} and \cite{Alex4}), examples constructed by suspension of homomorphisms (see  \cite{Alex2,Alex4}),   and examples constructed by changes of metric and surgery (see \cite{AlexToeben}). 

An important property of Riemannian foliations is called  equifocality. In order to understand this concept, we need some preliminary definitions.

A \emph{Bott} or basic connection $\nabla$ of a foliation $\F$ is a connection of the normal bundle of the leaves  with $\nabla_XY=[X,Y]^{\nu\F}$ whenever $X\in \singularF$  and $Y$ is  a vector field of the normal bundle $\nu\F$ of the foliation. Here the superscript $\nu\F$ denotes projection onto $\nu\F$.

 A \emph{normal foliated vector field} is a normal field parallel with respect to the Bott connection. 
If we consider a local submersion $f$ which describes the plaques of $\F$ in a neighborhood of a point of $L$, then a normal foliated vector field is a normal projectable/basic vector field with respect to $f.$

The fundamental property of Riemannian foliations, called \emph{equifocality}  is described in the following proposition. 

\begin{proposition}
If $\xi$ is a normal parallel vector field (with respect to the Bott connection) along a curve $\beta:[0,1]\rightarrow L$  then the curve $t\mapsto \exp_{\beta(t)}(\xi)$ is  contained in the leaf $L_{\exp_{\beta(0)}(\xi)}$.
\end{proposition}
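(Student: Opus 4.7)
The plan is to reduce the claim to a local argument, since the statement is essentially a statement about how the \emph{holonomy} of $\F$ acts. By compactness of $[0,1]$ and continuity of $\beta$, I partition $[0,1]$ into finitely many subintervals $[t_{k-1},t_k]$ such that $\beta([t_{k-1},t_k])$ lies in the domain $U_k$ of a Riemannian submersion $f_k\colon U_k\to\sigma_k$ describing the plaques of $\F$ (as in Definition \ref{example-holohomypseudogroup}), and moreover $\xi$ is small enough that $\exp_{\beta(t)}(\xi(t))$ also lies in $U_k$ for $t\in[t_{k-1},t_k]$. It suffices to prove that on each such subinterval the curve $t\mapsto\exp_{\beta(t)}(\xi(t))$ lies in a single plaque of $f_k$, for then by continuity it is contained in a single (connected) leaf of $\F$ throughout $[0,1]$.

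The key identification is that, with respect to a local Riemannian submersion $f\colon U\to\sigma$, the normal foliated vector fields on $U$ are precisely the horizontal vector fields that are $f$-projectable, and these are characterized by being parallel with respect to the Bott connection in \emph{every} direction tangent to the leaves. Therefore, given $t_{0}$ in a subinterval, I would pick an arbitrary extension of $df_{\beta(t_{0})}(\xi(t_{0}))\in T_{f(\beta(t_0))}\sigma$ to a smooth vector field $\eta$ on $\sigma$ and define $\tilde\xi$ on $U$ as the horizontal lift of $\eta$. Then $\tilde\xi$ is a normal foliated vector field with $\tilde\xi_{\beta(t_{0})}=\xi(t_{0})$, and being Bott-parallel in all leaf directions it is, in particular, Bott-parallel along $\beta$. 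Uniqueness of Bott parallel transport along $\beta$ then forces $\tilde\xi_{\beta(t)}=\xi(t)$ throughout the subinterval.

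The computation is then immediate from O'Neill's theorem for Riemannian submersions. For each $t$ in the subinterval, the geodesic $s\mapsto\exp_{\beta(t)}(s\,\tilde\xi_{\beta(t)})$ starts horizontally, hence stays horizontal (this is where the transnormality of $\F$ enters), and projects under $f$ to the geodesic
\[
s\longmapsto \exp^{\sigma}_{f(\beta(t))}\bigl(s\,df_{\beta(t)}(\tilde\xi_{\beta(t)})\bigr)=\exp^{\sigma}_{q_{0}}(s\,\eta_{q_{0}}),
\]
where $q_{0}=f(\beta(t))$ is independent of $t$ because $\beta$ lies in a single plaque, and $df(\tilde\xi)=\eta$ at $q_{0}$ is independent of $t$ because $\tilde\xi$ is foliated. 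Evaluating at $s=1$ gives $f(\exp_{\beta(t)}(\xi(t)))=\exp^{\sigma}_{q_{0}}(\eta_{q_{0}})$, constant in $t$, which is exactly the claim that the curve $t\mapsto\exp_{\beta(t)}(\xi(t))$ stays in a single plaque.

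The main subtlety is the justification that a Bott-parallel normal field along $\beta$ really is the restriction of a normal foliated vector field; this is where one must use that the Bott connection on $\nu\F$ is flat along leaves in the sense required, and invoke the characterization of foliated fields via local submersions. Once this is in hand, the rest is just O'Neill plus a compactness gluing.
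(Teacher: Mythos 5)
The paper itself supplies no proof of this proposition; equifocality is stated as a known fact for (regular) Riemannian foliations and the citation \cite{AlexToeben2} is only for the singular extension. So there is no ``paper proof'' to match, but the argument still needs to be assessed on its own terms.

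Your local analysis is correct and is essentially the standard reduction: Bott-parallel normal fields along a plaque are exactly the restrictions of $f$-projectable horizontal fields, and O'Neill then forces the endpoint map to stay in a single plaque because both the footpoint $q_0=f(\beta(t))$ and the projected initial velocity $\eta_{q_0}$ are constant in $t$. The genuine gap is the extra hypothesis you slip in at the start: ``$\xi$ is small enough that $\exp_{\beta(t)}(\xi(t))$ also lies in $U_k$.'' The proposition imposes no smallness on $\xi$; on the contrary, the name \emph{equifocality} and its role in the theory refer precisely to the fact that the statement survives past focal distances, where the geodesic $s\mapsto\exp_{\beta(t)}(s\xi(t))$ leaves any fixed trivializing neighborhood. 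For small $\xi$ the statement is essentially the local description of a Riemannian foliation and carries little content.

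To close the gap you must iterate along the $s$-parameter. Subdivide $0=s_0<s_1<\dots<s_N=1$ so that, for $t$ near a fixed $t_0$, each arc $\gamma_t\vert_{[s_{j-1},s_j]}$ with $\gamma_t(s)=\exp_{\beta(t)}(s\xi(t))$ lies in a trivializing neighborhood $U_j$, and prove by induction on $j$ the two-part statement: (i) $t\mapsto\gamma_t(s_j)$ lies in a single leaf $L_j$, and (ii) the velocity field $t\mapsto\gamma_t'(s_j)$ is Bott-parallel along that curve. Part (i) at step $j$ is exactly your local argument in $U_j$ applied to the curve $t\mapsto\gamma_t(s_{j-1})$ with normal field $\gamma_t'(s_{j-1})$. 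Part (ii) is the new ingredient that lets the induction continue: in $U_j$, $\gamma_t'(s_j)$ is the horizontal lift at $\gamma_t(s_j)$ of the (fixed, $t$-independent) velocity of the base geodesic, hence the restriction of a projectable horizontal field, hence Bott-parallel. You also need a compactness argument in $t$ to carry this across all of $[0,1]$. Without step (ii) and the induction, the argument does not reach the stated conclusion for general $\xi$, which is the nontrivial case.
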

This property still holds even for singular Riemannian foliations  and implies that one  can reconstruct the  (singular) foliation by taking all parallel submanifolds of a (regular) leaf with trivial holonomy (see \cite{AlexToeben2}).

The equifocality allows us to introduce the concept of  parallel transport (with respect to the Bott connection) of  horizontal segments of geodesic.

\begin{definition}
\label{paralleltransport}
Let   $\beta:[a,b]\rightarrow L$ be a piecewise curve and $\gamma:[0,1]\rightarrow M$ a segment of horizontal geodesic  such that $\gamma(0)=\beta(a)$. Let $\xi_0$ be a vector of the normal space $\nu_{\beta(a)}L$  such that $\exp_{\gamma(0)}(\xi_0)=\gamma(1)$ and $\xi:[a,b]\rightarrow \nu L$ the parallel transport of $\xi_0$ with respect to the Bott connection along $\beta$.  
We define $\|_\beta(\gamma):=\hat{\gamma},$
where $\hat{\gamma}:[0,1]\rightarrow M$ is the segment of geodesic given by $s\rightarrow \hat{\gamma}(s)=\exp_{\beta(b)}(s\,\xi(b))$. We also set 
$\eta(\gamma,\beta):=\hat{\beta}$, where $\hat{\beta}$ is the curve contained in $L_{\gamma(1)}$ defined as $s\rightarrow\hat{\beta}(s)=\exp_{\beta(s)}(\xi(s))$.
\end{definition}

Due to the equifocality of $\F$, we can give an alternative definition of holonomy map of a Riemannian foliation.

\begin{definition}
\label{definition-holonomy-map}
Let $\beta:[0,1]\rightarrow L$ be a piecewise curve and $S_{\beta(i)}:=\{\exp_{\beta(i)}(\xi) | \xi\in \nu_{\beta(i)} L, \|\xi\|<\epsilon\}$ the \emph{slice} at $\beta(i)$, for $i=0,1$. Then a holonomy map $\varphi_{[\beta]}:S_{\beta(0)}\rightarrow S_{\beta(1)}$ is defined as 
$\varphi_{[\beta]}(x):=||_{\beta}\gamma(r)$, where $\gamma:[0,r]\rightarrow S_{\beta(0)}$ is the minimal segment of geodesic that joins $\beta(0)$ to $x$.
Since the Bott connection is locally flat, the parallel transport depends only on the homotopy class $[\beta]$. 
\end{definition} 

Using the   holonomy map of a Riemannian foliation  we can define horizontal periodic geodesics as follows.

\begin{definition}
\label{definition-periodicgeodesic}
Let $\F$ be a Riemannian foliation. A geodesic $\gamma$ is called  $\F$ \emph{horizontal periodic} if
\begin{enumerate}
\item[(a)] $\gamma$ is horizontal, i.e., is orthogonal to the leaves of $\F$,
\item[(b)] there exists $0<t_{1}$ such that $\gamma(t_{1})\in L_{\gamma(0)}$,
\item[(c)] there exists a holonomy map  $\varphi_{[\beta]}$ such that $ d \varphi_{[\beta]}\gamma'(0)= \gamma'(t_{1}).$
\end{enumerate}
If $t_{1}$ is the smallest positive number that satisfies (b) and (c) then $t_{1}$ is called the \emph{period} of $\gamma$.
\end{definition}

\begin{remark}
\label{proposition-horizontalPeriodicalGeodesic-periodic}
By the equifocality of Riemannian foliations  we can deduce that 
for each fixed $s$ and each $n\in \mathbb{Z}$ we have:
\begin{enumerate}
\item[(a)]  $\gamma(n t_{1}+s)\in L_{\gamma(s)}$;
\item[(b)] there exists a holonomy map  $\varphi_{[\beta_{n}]}$ such that $ d \varphi_{[\beta_{n}]}\gamma '(s)= \gamma '(n t_{1}+s).$
\end{enumerate}
\end{remark}

 As observed in Remark \ref{closedembedded}, the space of leaves of a Riemannian foliation with closed embedded leaves is isomorphic to a Riemannian orbifold.  Furthermore,  for each closed geodesic of the Riemannian orbifold $M/\F$ there exists a horizontal periodic geodesic and vice versa.  This implies in particular the next result. 

\begin{proposition}
A Riemannian foliation with closed embedded leaves $(M,\F)$ admits a horizontal periodic geodesic if and only if the orbifold $M/\F$ admits a closed geodesic.
\end{proposition}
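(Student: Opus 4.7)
The plan is to exploit the identification $M/\F \cong \Sigma/W$ from Remark \ref{closedembedded}, where $\Sigma = \amalg \sigma_i$ is the disjoint union of local transversals and $W$ is the holonomy pseudogroup generated by the transitions $w_{i,j}$ of Definition \ref{example-holohomypseudogroup}. The key local fact is that the submersions $f_i : U_i \to \sigma_i$, being Riemannian, induce a bijection between horizontal geodesic segments of $M$ inside $U_i$ and geodesic segments of $\sigma_i$ via horizontal lift; moreover, on overlaps the relation $f_j = w_{i,j} \circ f_i$ forces differentials to intertwine, $df_j = dw_{i,j} \circ df_i$.

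For the forward direction, given an $\F$-horizontal periodic geodesic $\gamma$ of period $t_1$, I would choose a partition $0 = s_0 < \cdots < s_n = t_1$ together with foliation charts $U_{j_i}$ so that $\gamma([s_{i-1}, s_i]) \subset U_{j_i}$ and consecutive charts overlap at $\gamma(s_i)$. Setting $\gamma_i := f_{j_i} \circ \gamma|_{[s_{i-1},s_i]}$ yields geodesics in $\sigma_{j_i}$, and the interior transitions $w_i := w_{j_i, j_{i+1}}$ match both endpoints and tangents automatically by the intertwining relation above. The terminal datum required by Definition \ref{definition-ClosedGeodesicOrbifold} is furnished by the holonomy map $\varphi_{[\beta]}$ guaranteed by the periodicity hypothesis; expressed as a composition of chart transitions along $\beta$, it descends to an element $w_n \in W$ with the required matching at $\gamma_n(1)$ and $\gamma_1(0)$.

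For the reverse direction, given a closed geodesic $(\gamma_i, w_i)_{i=1}^n$ in $\Sigma/W$, I would construct $\tilde\gamma : [0, t_1] \to M$ inductively. Horizontally lift $\gamma_1$ to $\tilde\gamma_1 \subset U_{j_1}$ starting at any chosen preimage of $\gamma_1(0)$. At the junction $s_i$, use that $w_i$ identifies two points on the same leaf in $M$; parallel-transporting $\tilde\gamma_i'(s_i)$ along a curve in that common leaf via the $\|_\beta$ of Definition \ref{paralleltransport}, and using the matching $dw_i\, \gamma_i'(s_i) = \gamma_{i+1}'(s_i)$, I obtain the initial data needed to horizontally lift $\gamma_{i+1}$ as the continuation of the same horizontal geodesic. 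The terminal element $w_n$ then delivers both $\tilde\gamma(t_1) \in L_{\tilde\gamma(0)}$ and the holonomy-tangent match of Definition \ref{definition-periodicgeodesic}(c), i.e., horizontal periodicity.

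The main subtlety I expect is verifying smoothness at each junction in the reverse direction: one must argue that parallel transport in the leaf followed by horizontal lifting does not introduce a kink, which hinges on equifocality together with the fact that the Bott connection is the one encoding the transition isometries generating the holonomy pseudogroup $W$. Once that reconciliation is in place, both directions become essentially tautological restatements of the identification $M/\F \cong \Sigma/W$ under the local correspondence between horizontal lifts and transverse geodesics.
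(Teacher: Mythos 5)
The paper gives no explicit proof of this proposition; it is asserted as an immediate consequence of the isomorphism $M/\F\cong\Sigma/W$ established in Remark~\ref{closedembedded} (``for each closed geodesic of the Riemannian orbifold $M/\F$ there exists a horizontal periodic geodesic and vice versa''). Your argument is a correct, explicit rendering of precisely this correspondence, and it takes the only natural route: project horizontal geodesic segments through the local Riemannian submersions $f_i$ and glue via the transitions $w_{i,j}$ in one direction; horizontally lift geodesic segments of $\sigma_i$ and match across junctions in the other. You also put your finger on the genuine subtlety, namely reconciling the geometric holonomy map $\varphi_{[\beta]}$ (Definition~\ref{definition-holonomy-map}, built from the Bott connection) with the abstract holonomy pseudogroup $W$ generated by the $w_{i,j}$, and checking that the parallel transport $\|_\beta$ of Definition~\ref{paralleltransport} produces no kink at the junctions. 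Two small caveats worth spelling out if you were to write this in full: (i) in the forward direction, the identification of $\varphi_{[\beta]}$ with an element $w_n\in W$ passes through the restriction of $f_i$ to slices, which identifies $S_x$ diffeomorphically with a local transversal; (ii) in the reverse direction, the endpoint $p$ of the lift $\tilde\gamma_i$ need not lie in $U_{j_{i+1}}$, so one must either subdivide the given closed geodesic and refine the cover, or transport in stages through intermediate charts, before the tangent-matching computation you sketch goes through. Neither issue is an obstruction; your proof is sound and matches the argument the paper treats as self-evident.
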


In what follows we prove that   if $M$ is simply connected and $M/\F$ is  a compact  orfibold then $M/\F$ admits a closed geodesic, even if $\F$ is a singular Riemannian foliation (s.r.f. for short).

\begin{theorem}
\label{theorem-geodesicafechadaM/F}
Let $\F$ be a singular Riemannian foliation  with closed embedded leaves on a  Riemannian manifold $M$ with finite fundamental group. Assume that $M/\F$ is a compact orbifold. Then $M/\F$ admits a nontrivial closed geodesic.
\end{theorem}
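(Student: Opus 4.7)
My plan is to reduce to Theorem~\ref{theorem-ClosedGeodesicGoodOrbifold} by a case analysis based on the dichotomy between developable (good) and non-developable orbifolds, after first passing to a simply connected cover.

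First I would replace $M$ by its Riemannian universal cover $p\colon\widetilde{M}\to M$, which is a finite Riemannian covering because $\pi_{1}(M)$ is finite. The pullback $\widetilde{\F}:=p^{-1}\F$ is a singular Riemannian foliation on $\widetilde{M}$ with closed embedded leaves, and the induced map $\widetilde{M}/\widetilde{\F}\to M/\F$ is a finite orbifold covering. Since orbifold coverings are local isometries in the orbifold sense, it suffices to produce a nontrivial closed geodesic upstairs; it will project to a nontrivial closed geodesic in $M/\F$. Thus I may and shall assume $M$ is simply connected, and the leaf space $M/\F$ remains a compact orbifold.

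Next I split into two cases. If $M/\F$ is not developable, the Guruprasad--Haefliger result recalled in item~(1) of the introduction yields immediately a nontrivial closed geodesic of the orbifold $M/\F$. Otherwise $M/\F$ is developable, and using Remark~\ref{remark-superResumoAntigoApendice} together with the structural results on singular Riemannian foliations of Alekseevsky \emph{et al.}~\cite{Alekseevsky} and Lytchak \cite{Lytchak}, I write $M/\F\simeq \Sigma/W$ where $\Sigma$ is a connected complete Riemannian manifold and $W$ is a discrete group acting properly by isometries with $\Sigma/W$ compact; the simple connectivity of $M$ is what allows one to identify $W$ in explicit geometric terms as (essentially) the orbifold fundamental group. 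If $W$ is finite, then $\Sigma$ itself must be compact by the finite-fundamental-domain argument of Remark~\ref{remark-geodesica-orbifoldConexo}, and the classical Lyusternik--Fet theorem on compact Riemannian manifolds supplies a nontrivial closed geodesic in $\Sigma$, whose projection is the required closed geodesic in $\Sigma/W\simeq M/\F$. If $W$ is infinite, the same structural results force the existence of an element of $W$ without fixed points on $\Sigma$, and Theorem~\ref{theorem-ClosedGeodesicGoodOrbifold} directly produces the desired closed geodesic.

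The main difficulty I anticipate is in the last step, namely exhibiting a fixed-point-free element of $W$ when $W$ is infinite. This is exactly the obstruction described in Remark~\ref{remark-ExemplosPatologicos}: for a completely arbitrary discrete isometry group such an element need not exist, and ruling this out relies on the fact that $W$ here is not arbitrary but arises as the deck group of the orbifold universal cover of the leaf space of a singular Riemannian foliation on a simply connected manifold. The nontrivial input from \cite{Alekseevsky} and \cite{Lytchak} is precisely what prevents the pathological infinite-torsion case and allows the argument to close.
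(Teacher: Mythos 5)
Your overall architecture matches the paper's: reduce to the simply connected case via the (finite) universal cover, split into non-developable vs.\ developable, use item~(1) of Guruprasad--Haefliger in the first case, and in the second case write $M/\F\simeq\Sigma/W$ and feed a fixed-point-free element of $W$ into Theorem~\ref{theorem-ClosedGeodesicGoodOrbifold}. However, the step you flag as "the main difficulty I anticipate" is precisely where your proof has a genuine gap, and the way you propose to close it is not correct. The structural results you cite do \emph{not} by themselves produce a fixed-point-free element when $W$ is infinite. What Lytchak gives is that $M/\F$ is a Coxeter orbifold, and what Alekseevsky \emph{et al.}\ give is that it is the Weyl chamber of a Riemannian Coxeter manifold $(\Sigma,W)$ with $W$ a finitely generated Coxeter group (using compactness of $\Sigma/W$). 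At this point one still has exactly the potential pathology of Remark~\ref{remark-ExemplosPatologicos}: an infinite torsion group acting properly. The ingredient that rules it out in the paper is a purely group-theoretic fact with a separate source, namely Gonciulea's theorem that every finitely generated Coxeter group has a torsion-free subgroup of finite index; this is what forces an infinite $W$ to contain an element of infinite order (which then has no fixed points, since isotropy groups of a proper discrete action are finite). Without citing this (or an equivalent statement about Coxeter groups), your argument does not close: "arises as the deck group of the orbifold universal cover of a leaf space" is not on its own a reason to exclude infinite torsion groups.

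Two smaller points. First, your invocation of Remark~\ref{remark-superResumoAntigoApendice} is misplaced: that remark goes in the opposite direction (realizing an arbitrary Riemannian orbifold as a leaf space with compact leaves) and plays no role here; the identification $M/\F\simeq\Sigma/W$ in the developable case comes from the Coxeter-manifold realization in Alekseevsky \emph{et al.}, not from that remark. Second, your handling of the $W$ finite subcase via Lyusternik--Fet is fine and is essentially what Remark~\ref{remark-geodesica-orbifoldConexo} provides; the paper's proof also separately treats the regular (non-singular) case by observing, via Salem, that the orbifold fundamental group is then trivial, but for the singular case the dichotomy you use is the same as the paper's.
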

\begin{proof}
 First we will prove the case where $M$ is simply connected. If $\F$ is a (regular) Riemannian foliation, according to Salem \cite[Appendix D]{Molino}, there exists a surjective homomorphism between $\pi_{1}(M)$ and the fundamental group of the holonomy of the foliation,  that coincides with the fundamental group of the orbifold $M/\F$. Therefore the fundamental group of the orbifold $M/\F$ is  trivial and hence it  cannot be a good orbifold (see Bridson and  Haefliger \cite[page 608]{BridsonHaefliger}). The result follows then from item (1) of the theorem of  Guruprasad and Haefliger
\cite{Haefliger}.
If $\F$ is a s.r.f. and $M/\F$ is  not developable, then the result also follows from item (1) of the theorem of Guruprasad and Haefliger \cite{Haefliger}.

Now if $\F$ is a s.r.f. and $M/\F$ is  a good orbifold, then according to  Lytchak \cite{Lytchak} $M/\F$ is a Coxeter orbifold.  
It follows from Alekseevsky et al. \cite[Theorem 6.4]{Alekseevsky} that any Coxeter orbifold is the Weyl chamber of a Riemannian Coxeter manifold     
$(\Sigma,W)$ i.e., $W$ is  a discrete subgroup of isometries of a complete Riemannian
manifold $\Sigma$, which is generated by disecting reflections. The fact that $\Sigma/W$ is compact implies that $W$ is finitely generated Coxeter group (see  \cite[Theorem 2.11 and Theorem 3.5]{Alekseevsky}).  It is known that any finitely generated Coxeter group $W$ has a torsion-free subgroup
of finite index (see Gonciulea \cite[Proposition 1.4]{Gonciulea}). Therefore $W$ is finite or $W$ has an element of infinite order. In both cases we have seen in Theorem \ref{theorem-ClosedGeodesicGoodOrbifold} and Remark \ref{remark-geodesica-orbifoldConexo} that $M/\F=\Sigma/W$ admits a closed geodesic.

 Now consider the case where $M$ has finite fundamental group. Let $\tilde{M}$ be the universal covering of $M$ . Then the foliation $(M,\F)$ induces naturally a foliation $(\tilde{M},\tilde{\F})$. As we have assumed that the fundamental group of $M$ is finite, it follows that $\tilde{M}/\tilde{\F}$ is compact and the leaves of $\tilde{\F}$ are closed and embedded. Thus we can apply the first part of the proof to obtain a closed geodesic in $\tilde{M}/\tilde{\F}$ that projects to a closed geodesic in $M/\F$ as desired.
\end{proof}
\begin{remark}
Note that in the  proof of the above theorem, we  show  that each compact Coxeter orbifold admits a closed geodesic.
\end{remark}
When $M$ is simply connected and the leaves of $\F$ are closed embedded
there are (at least) two special classes of singular Riemannian foliations such that $M/\F$ is  an  orbifold.

The first one is the class of \emph{singular Riemannian  foliations without horizontal conjugate points}. This concept was introduced by Lytchak and Thorbergsson \cite{LytchakThorbergsson1} and generalizes the definition of  variationally complete actions. $\F$ is \emph{without horizontal conjugate points} if the following is true for all leaves $L$ and all geodesics $\gamma$ meeting $L$ perpendicularly. Any $L$-Jacobi field $J$ along $\gamma$ that is tangent to a leaf of $\F$ different from $L$ is tangent to all leaves $\gamma$ passes through. It follows from Lytchak \cite[Theorem 1.2, Theorem 1.6]{Lytchak} that $M/\F$ is a Riemannian Coxeter  orbifold, if $M$ is simply connected and the leaves of $\F$ are closed embedded (see also Lytchak and Thorbergsson \cite[Theorem 1.7]{LytchakThorbergsson2}).

The other class is \emph{singular Riemannian foliations with sections} (s.r.f.s. for short). This concept was introduced by the first author \cite{Alex2}. Typical examples of singular Riemannian foliations with sections  are the partition by orbits of a polar  action, isoparametric foliations on space forms (some of them with inhomogeneous leaves) and  partitions by parallel submanifolds of an equifocal submanifold (see Terng and Thorbergsson \cite{TTh1} and Thorbergsson \cite{Th}).

A singular Riemannian foliation \emph{admits} sections if for each regular point $p$, the set $\Sigma:=\exp(\nu_{p}L)$ (\emph{section}) is a complete immersed submanifold that meets each leaf orthogonally.   

It was proved by Alexandrino and T\"{o}ben \cite[Theorem 1.6]{AlexToeben} that $M/\F$ is a Coxeter orbifold if $M$ is simply connected and the leaves of $\F$ are closed embedded (see also \cite{Alex5}). 

The above discussion and Theorem \ref{theorem-geodesicafechadaM/F} imply  the next corollary.

\begin{corollary}
\label{corollary-s.r.f.s-ouno  horizontal conjugate points-geodesicafechada}
Let $\F$ be a singular Riemannian foliation  with closed embedded leaves on a Riemannian manifold $M$ with finite fundamental group and such that $M/\F$ is compact. Assume that  $\F$ admits  sections or $\F$ has no  horizontal conjugate points. Then $M/\F$ admits a nontrivial closed geodesic.
\end{corollary}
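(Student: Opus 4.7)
The plan is to reduce this corollary to Theorem \ref{theorem-geodesicafechadaM/F}, whose single unverified hypothesis in the present setting is that $M/\F$ is an orbifold. Everything else (closed embedded leaves, finite fundamental group, compactness of the leaf space) is assumed directly. So the task splits into two small pieces: verifying the orbifold structure under each of the two geometric hypotheses, and then quoting the theorem.

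First, I would recall the two classification results already cited in the discussion preceding the corollary. If $\F$ has no horizontal conjugate points and $M$ is simply connected with closed embedded leaves, Lytchak's theorem (\cite{Lytchak}, Theorems 1.2 and 1.6) asserts that $M/\F$ is a (Coxeter) Riemannian orbifold. If instead $\F$ admits sections and $M$ is simply connected with closed embedded leaves, Alexandrino and T\"{o}ben's result (\cite{AlexToeben}, Theorem 1.6) asserts the same conclusion. So under the simply-connected assumption the orbifold hypothesis of Theorem \ref{theorem-geodesicafechadaM/F} is automatic, and that theorem immediately supplies a nontrivial closed geodesic in $M/\F$.

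Next I would remove the simply-connected assumption by the universal cover trick already used in the proof of Theorem \ref{theorem-geodesicafechadaM/F}. Let $\pi:\tilde{M}\to M$ be the universal cover; since $\pi_{1}(M)$ is finite, $\pi$ is a finite Riemannian covering. Pull back $\F$ to a partition $\tilde{\F}$ of $\tilde{M}$; because $\pi$ is a local isometry and a covering map, $\tilde{\F}$ is again a singular Riemannian foliation, it still admits sections (respectively still has no horizontal conjugate points, since both conditions are local/geodesic in nature), and its leaves remain closed and embedded. Moreover $\tilde{M}/\tilde{\F}$ is compact because $M/\F$ is compact and the cover is finite. Apply the simply-connected case to $(\tilde{M},\tilde{\F})$ to obtain a nontrivial closed geodesic of $\tilde{M}/\tilde{\F}$, and then project it under the induced map $\tilde{M}/\tilde{\F}\to M/\F$ to produce the desired closed geodesic in $M/\F$.

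There is essentially no serious obstacle, since the heavy lifting is contained in Theorem \ref{theorem-geodesicafechadaM/F} and in the two structural theorems it invokes. The only point requiring a moment of care is checking that the two geometric hypotheses (\emph{admits sections}, \emph{no horizontal conjugate points}) are preserved under pulling $\F$ back by a finite Riemannian cover; both follow routinely from the fact that $\pi$ is a local isometry, so that horizontal geodesics, perpendicularity to leaves, Jacobi fields along them, and normal exponential images of regular leaves all lift correspondingly.
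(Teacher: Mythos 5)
Your proof is correct and follows the paper's intended route: the paper simply writes that ``the above discussion and Theorem \ref{theorem-geodesicafechadaM/F} imply the next corollary,'' where the discussion cites exactly the two structural theorems you invoke (Lytchak for the no-horizontal-conjugate-points case, Alexandrino--T\"oben for the sections case) to deliver the compact-orbifold hypothesis. Your explicit verification that both hypotheses lift through a finite Riemannian covering is the right thing to check and is correctly argued. One small economy you could make: since Theorem \ref{theorem-geodesicafechadaM/F} already performs the universal-cover reduction internally, you could instead note that $M/\F$ is the quotient of the compact orbifold $\tilde{M}/\tilde{\F}$ by the finite deck transformation group, hence is itself a compact orbifold, and then apply Theorem \ref{theorem-geodesicafechadaM/F} directly to $(M,\F)$ rather than redoing the covering step yourself; but the proof as written is equally valid.
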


\begin{remark}
The above corollary and  Myers' theorem imply that if $\F$ is a s.r.f.  that admits  sections or $\F$ has no  horizontal conjugate points on a complete Riemannian manifold with $\mathrm{Ric}\geq k>0$ (e.g. symmetric spaces of compact type) then $M/\F$ admits a closed geodesic. Therefore we have the existence of closed geodesics in the orbit spaces  of polar and variationally complete actions in  symmetric spaces of compact type, the usual space where these actions are studied.  
\end{remark}
\begin{remark} 
If $\F$ admits sections, then, due to the equifocality of $\F$,  the holonomy map $\varphi_{[\beta]}$ can be extended to include singular points (see \cite{Alex2}) and hence   Definition \ref{definition-periodicgeodesic} still makes sence for this class of singular  foliations.
 For a fixed section $\Sigma$ we can consider the pseudogroup $W_{\Sigma}$ generated by all  holonomy map $\varphi_{[\beta]}$ such that $\beta(0)$ and $\beta(1)$ belong to $\Sigma.$ This pseudogroup is called the \emph{Weyl pseudogroup}. It is possible to prove that $M/\F=\Sigma/W_{\Sigma}$ (see \cite{AlexToeben}).
It is easy to see then that the existence of closed  geodesics in $M/\F$ is equivalent to the existence of horizontal  geodesics in $M$ (as in Definition \ref{definition-periodicgeodesic}).
\end{remark}

We conclude this section suggesting some natural questions.
The first one is how closed geodesics of $M/\F$ can be used to study the
transverse geometry of a s.r.f. that admits sections or has no  horizontal conjugate points.
 It is also natural to ask for conditions under which $M/\F$ admits 
closed geodesics even  if $M/\F$ is not an orbifold. 
A naive approach to this last question would be to use 
a recent result (see \cite{Alex6}) that assures us
that $M/\F$ is always a Gromov-Hausdorff limit of a sequence of orbifolds
if the leaves of $\F$ are closed embedded and $M/\F$ is compact.


\section{Riemannian foliations and shortening process}

In this section we   study the shortening process with respect to Riemannian foliations and   prove Theorem \ref{teo-RF-PeriodicGeodesic} below. 
Theorem \ref{teo-RF-PeriodicGeodesic} assures the existence of horizontal periodic geodesics, assuming only topological conditions about the space $M$ and the foliation $\F$.



\begin{theorem}
\label{teo-RF-PeriodicGeodesic}
Let $\F$ be a Riemannian foliation with compact leaves on a compact Riemannian manifold $M$. 
Assume that either one of the two conditions below is satisfied: 
\begin{enumerate}
\item[(a)] there exists a loop $\alpha$ in $M$ that is not free homotopic to any  loop contained in any leaf.
\item[(b)] $\pi_{1}(M)$ admits a sequence of distinct elements in different classes of free homotopy  and the fundamental group of each leaf has finite cardinality.
\end{enumerate}
Then there exists an $\F$-horizontal periodic geodesic. 
In particular, if there exists a loop $\alpha$ in $M$ that satisfies the condition of item (a), 
then a subsequence  of iterations of double shortening of $\alpha$ 
converges to a nontrivial horizontal periodic geodesic. 
\end{theorem}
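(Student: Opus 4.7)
My plan is to adapt Birkhoff's shortening process to the transverse geometry of the foliation, producing closed geodesics in the leaf space $|M/\F|$ whose horizontal lifts are the desired periodic geodesics (compare the proposition immediately preceding the theorem, which identifies closed geodesics in $M/\F$ with $\F$-horizontal periodic geodesics).

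First I would define a double shortening operator $D$. Cover the compact manifold $M$ by flow-box neighborhoods $\{U_{j}\}$ on which $\F$ is given by Riemannian submersions $f_{j}:U_{j}\to\sigma_{j}$ (see Definition~\ref{example-holohomypseudogroup}), with Lebesgue number $\delta>0$, and fix a uniform transverse convexity radius $\rho>0$ for the $\sigma_{j}$ together with an intrinsic convexity radius for the leaves. For a piecewise-smooth loop $\alpha:S^{1}\to M$, subdivide $S^{1}$ at $2n$ equally spaced points so that each subarc $\alpha_{i}$ from $p_{i}=\alpha(t_{i})$ to $p_{i+1}=\alpha(t_{i+1})$ lies in a single flow-box $U_{j}$ and has projected length less than $\rho$. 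Define the first shortening $S_{1}(\alpha)$ by replacing each $\alpha_{i}$ with the concatenation of (a) the horizontal lift, starting at $p_{i}$, of the minimizing geodesic in $\sigma_{j}$ from $f_{j}(p_{i})$ to $f_{j}(p_{i+1})$, followed by (b) the short minimizing geodesic inside the leaf $L_{p_{i+1}}$ joining the endpoint of that horizontal lift to $p_{i+1}$. Define $S_{2}$ identically, but with the subdivision shifted by a half-step $1/(4n)$, and set $D:=S_{2}\circ S_{1}$. The Birkhoff double shift is the standard device that prevents corners from persisting at fixed subdivision parameters as one iterates.

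The three key properties I would then establish are: (i) $D(\alpha)$ is free homotopic to $\alpha$ modulo leaves, since each local modification is a continuous deformation inside a single flow-box; (ii) the horizontal length $L_{h}(\alpha):=\ell(\pi\circ\alpha)$ is non-increasing under $D$, with equality forcing each horizontal lift in step (a) to already coincide with the transverse piece of $\alpha$ and each leaf correction in step (b) to be constant --- i.e., $\alpha$ is already an $\F$-horizontal periodic geodesic in the sense of Definition~\ref{definition-periodicgeodesic}; and (iii) the full $M$-length is also controlled, so that the iterates remain in a bounded set of piecewise-smooth loops. Under hypothesis~(a), $\pi\circ\alpha$ represents a nontrivial free homotopy class in the compact metric space $|M/\F|$, and the flow-box cover together with the equifocality of $\F$ yields a positive lower bound $\ell_{0}>0$ on horizontal length within this class. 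The iterates $\alpha_{k}:=D^{k}(\alpha)$ therefore have horizontal lengths decreasing to some $\ell\geq\ell_{0}$; by Arzel\`a--Ascoli (equicontinuity from uniform length bounds, images in the compact $M$) a subsequence converges uniformly to a limit curve $\bar{\gamma}$. The double-shift design ensures $\bar{\gamma}$ has no corners, and combined with~(ii) this forces $\bar{\gamma}$ to be a nontrivial horizontal periodic geodesic.

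For hypothesis~(b), I would first show that only finitely many conjugacy classes of $\pi_{1}(M)$ are represented by loops sitting inside a leaf: compactness of $M$ together with the fact that leaves of a Riemannian foliation are locally mutually isometric bounds the number of topological types of leaves, and the finiteness of each leaf fundamental group then gives finitely many such conjugacy classes. Hence the infinite sequence of distinct classes supplied by~(b) must contain a loop not free homotopic to any leaf loop, reducing~(b) to case~(a). The main obstacle is establishing property~(ii), where one must compare the length of the original subarc $\alpha_{i}$ with the sum of the horizontal piece and the leaf correction in $S_{1}(\alpha_{i})$ and show that the total strictly decreases except precisely at horizontal periodic geodesics. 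This relies on the Riemannian submersion inequality (any curve is at least as long as its horizontal projection) together with a careful use of equifocality to match the holonomy condition in Definition~\ref{definition-periodicgeodesic}(c) in the limit $\bar{\gamma}$.
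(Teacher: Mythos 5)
Your overall strategy (a Birkhoff double shortening adapted to the transverse geometry) is the same in spirit as the paper's, but there is a genuine gap precisely at the crux of case~(a). You assert that hypothesis~(a) implies ``$\pi\circ\alpha$ represents a nontrivial free homotopy class in the compact metric space $|M/\F|$'' and then invoke this to get a uniform positive lower bound $\ell_0$ on horizontal length. Neither step is justified, and the first is in fact false in general: $|M/\F|$ can be simply connected (even contractible, as in Example~\ref{example-theorem-ClosedGeodesicGoodOrbifold}), so $\pi\circ\alpha$ can be nullhomotopic in the topological space $|M/\F|$ even when $\alpha$ is not free homotopic in $M$ to any loop in a leaf. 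The actual content of the argument has to be a contradiction, not an a~priori bound: one first shows (as the paper does via the $\F$-closed pair formalism and Lemma~\ref{homotopia-base}/Proposition~\ref{proposition-homotopia-encurtamento-curvaoriginal}) that each iterate of the shortening is $\F$-homotopic to $\alpha$; then one observes that if a subsequence of iterates converged to a point $y$, both the horizontal part $\gamma_n$ and the leaf piece $\beta_n$ would eventually lie in a saturated tubular neighborhood of $L_y$, and radial projection onto $L_y$ produces a loop in $L_y$ free homotopic to $\beta_n\star\gamma_n$, hence to $\alpha$, contradicting~(a). Your sketch skips exactly this step, which is where the real work is.

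A second, smaller gap is the holonomy condition~(c) of Definition~\ref{definition-periodicgeodesic}: your limit curve $\bar\gamma$ is pieced together from horizontal segments and leaf segments, and showing that $\bar\gamma$ is an $\F$-horizontal periodic geodesic (not merely that it has no corners) requires controlling the holonomy class in the limit. The paper does this via Lemma~\ref{lemma-gamma0-well-closed}, extracting a constant holonomy subsequence using the finiteness of the leaf holonomy group and Molino's uniform tubular-neighborhood lemma; you would need an analogue. On the positive side, your reduction of~(b) to~(a) --- only finitely many conjugacy classes of $\pi_1(M)$ are realized by leaf loops, because nearby leaves are finite coverings of a central compact leaf with finite fundamental group, and $|M/\F|$ is compact --- is correct and arguably cleaner than the paper's direct argument for~(b), which instead shows that infinitely many $\alpha_i$ would have to be free homotopic to loops in a single leaf $L_y$ and then invokes the finiteness of $\pi_1(L_y)$. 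Likewise your use of horizontal length rather than energy is a legitimate variant. But as written, the argument for~(a) does not close.
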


\begin{remark}\label{remark-obsa}
 We observe that item (a) of the last theorem is satisfied for example when there is an element of the fundamental group of $M$ with infinite order and the fundamental group of each leaf is finite.
\end{remark}


\begin{corollary}
\label{corollary-orbifoldComapcto-grupofundamentalnaotrivial}
Let $\Sigma/W$ be a Riemannian compact orbifold. Assume that the fundamental group of the topological space $|\Sigma/W|$ is nontrivial. Then there exists a closed geodesic in the orbifold $\Sigma/W$. 
\end{corollary}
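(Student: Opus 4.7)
The plan is to realize $\Sigma/W$ as the leaf space of a Riemannian foliation and then invoke Theorem \ref{teo-RF-PeriodicGeodesic}(a). By Remark \ref{remark-superResumoAntigoApendice}, there is a Riemannian foliation $\F^u$ on $M := U(E)/W$ whose leaf space is isomorphic as an orbifold to $\Sigma/W$. Each leaf of $\F^u$ is the image in $M$ of a fiber of $U(E)\to\Sigma$, hence a quotient of the connected compact Lie group $U(k)$ by a finite subgroup; in particular, the leaves are compact, connected and embedded. Since $\Sigma/W$ is compact and the fibers of $U(E)\to \Sigma$ are too, $M$ itself is a compact Riemannian manifold, and the underlying topological spaces $|M/\F^u|$ and $|\Sigma/W|$ coincide.

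The central task is to exhibit a loop $\alpha$ in $M$ that is not freely homotopic to any loop contained in a single leaf of $\F^u$. My plan is to pick a loop $\gamma$ in $|\Sigma/W|$ representing a nontrivial free homotopy class (which exists by hypothesis) and lift it to $M$ in three stages: first to a path $\tilde{\gamma}$ in $\Sigma$ via the discrete-group quotient $\Sigma\to|\Sigma/W|$, then to a path in $U(E)$ via the principal $U(k)$-bundle $U(E)\to\Sigma$, and finally by projecting down to $M$. Since $\tilde{\gamma}(0)$ and $\tilde{\gamma}(1) = w \cdot \tilde{\gamma}(0)$ lie in the same $W$-orbit, the two endpoints of the resulting path in $M$ lie in the same fiber of the quotient $p : M \to |\Sigma/W|$; closing it up with a path inside that (connected) fiber produces the desired $\alpha$ with $p \circ \alpha$ freely homotopic to $\gamma$ in $|\Sigma/W|$. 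Because any loop contained in a single leaf of $\F^u$ projects under $p$ to a constant loop, $\alpha$ cannot be freely homotopic to any such loop, lest $p \circ \alpha$ become freely null-homotopic, contradicting the choice of $\gamma$.

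Once the hypothesis of Theorem \ref{teo-RF-PeriodicGeodesic}(a) is verified in this way, that theorem yields an $\F^u$-horizontal periodic geodesic in $M$, whose projection to $M/\F^u \cong \Sigma/W$ is a closed geodesic of the orbifold, by the equivalence between horizontal periodic geodesics and closed geodesics of the leaf space recorded just before Theorem \ref{theorem-geodesicafechadaM/F}. The main obstacle I anticipate is the first lift, from $|\Sigma/W|$ to $\Sigma$, at points where $W$ has nontrivial isotropy and the quotient map fails to be a local homeomorphism; I would handle this by subdividing $\gamma$ finely enough so that each sub-arc lies in a slice neighborhood for the proper action of $W$, where the quotient is locally modeled by a finite group acting on a Euclidean ball and paths can be lifted by elementary means.
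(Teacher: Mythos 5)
Your proof is correct and, at the level of strategy, matches the paper's: both reduce to Theorem~\ref{teo-RF-PeriodicGeodesic}(a) via Remark~\ref{remark-superResumoAntigoApendice} by producing a loop in $M$ that is not freely homotopic to any loop contained in a leaf, and both close by projecting a putative free homotopy down to $|\Sigma/W|$ to contradict the nontriviality of $\gamma$. The genuine difference is in how the loop $\alpha$ is built. The paper constructs it intrinsically in $M$: it subdivides a representative loop in the compact orbifold so that each piece lies in a distinguished neighborhood, replaces each piece by a minimizing segment, lifts these to horizontal geodesic segments in $M$ joined by Bott parallel transport, and then closes up with a curve inside a leaf. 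You instead unwind the Moerdijk--Mr\v{c}un construction explicitly: lift $\gamma$ to a path $\tilde\gamma$ in $\Sigma$ (using path lifting across the finite-isotropy orbit map, which you correctly handle locally in slice neighborhoods), lift that to $U(E)$ as a principal $U(k)$-bundle, descend through the covering $U(E)\to U(E)/W=M$, and close up in the connected leaf $\cong U(k)/W_x$. Your route is a bit more structural and dispenses with the piecewise-geodesic approximation, at the cost of invoking the path-lifting property for orbit maps of proper isometric actions with finite isotropy; the paper's is more self-contained on that point and also directly yields a horizontal lift (though Theorem~\ref{teo-RF-PeriodicGeodesic}(a) does not require horizontality). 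Both are valid proofs.
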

\begin{proof}
It follows from Remark \ref{remark-superResumoAntigoApendice} that  there exists a Riemannian foliation $\F$ with compact leaves on a compact manifold $M$ such that $M/\F=\Sigma/W$. 

Let $\alpha$ be a nontrivial element of the fundamental group of the topological space $|\Sigma/W|$.

\afir: \emph{There exists a horizontal curve $\tilde{\alpha}$ whose projection in $M/\F$ is homotopic to $\alpha$}

In order to prove the claim note that for each point $x\in M/\F$ there exists a neighborhood $V_{x}$ such that for each point of $y\in V_{x}$ there exists only one segment that joins $y$ to $x$.
In fact this neighborhood is the image of a tubular neighborhood of a leaf in $M$ (see the definition of $\rho_0$ of  Subsection \ref{shortening}). 

Consider the curve $\alpha:[0,1]\rightarrow M/\F$. Then for each $t$ we can find a neighborhood $V_t$ and an interval $a_t\leq t\leq b_t$ such that $\alpha[a_t,b_t]\subset V_t$ and for $s\in [ a_t b_t ]$ there exists only one segment that joins $\alpha(t)$ to $\alpha(s)$.
Let $[\alpha(t),\alpha(s)]$ denotes this segment and $\star$ the concatenation of curves (see Remark \ref{remark-convention-concatenation}).
Note that the curve $\alpha|_{[t,b_t]}$ is homotopic (by  a homotopy that fixes  endpoints) to $[\alpha(t),\alpha(b_t)]$ by the family of curves
$\{\alpha|_{[s,b_t]}\star[\alpha(t),\alpha(s)]\}.$
Similarly $\alpha|_{[a_t,t]}$ is homotopic to $[\alpha(a_t),\alpha(t)].$ Therefore $\alpha[a_t,b_t]$ is homotopic to $\alpha|_{[t,b_t]}\star[\alpha(a_t),\alpha(t)]$ (by  a homotopy that fixes   endpoints). This fact and the fact that $[0,1]$ is compact imply that $\alpha$ is homotopic to a curve that is union of segments. Finally note that each segment can be lifted to a horizontal segment of geodesic in $M$
and by translation with respect to Bott connection we can construct the desired horizontal curve $\tilde{\alpha}$. This concludes 
the proof of the claim.

Now consider a curve $\beta$ in the leaf $L_{\tilde{\alpha}(0)}$ that joins $\tilde{\alpha}(1)$ with $\tilde{\alpha}(0)$. Note that the concatenation $\beta\star\tilde{\alpha}$ is not free homotopic to any  loop contained in any leaf. Otherwise we could project the homotopy between
$\beta\star\tilde{\alpha}$ and  a loop contained in a leaf and get a homotopy between $\alpha$ and a point of $\Sigma/W$. This  contradicts the fact that $\alpha$ is a nontrivial element of the fundamental group of the topological space $|\Sigma/W|$.

Finally it follows from item (a) of Theorem \ref{teo-RF-PeriodicGeodesic} that there exists a nontrivial horizontal periodic geodesic of $\F$ and hence a nontrivial closed geodesic of the orbifold $\Sigma/W=M/\F$.

\end{proof}

Let $\F$ be a s.r.f. with closed embedded leaves on a complete manifold. Assume  that  $\F$ admits sections or $\F$ has no horizontal conjugate points. Then Lytchak and Thorbergsson \cite{LytchakThorbergsson2} proved that $M/\F$ is an orbifold (not necessarly a good one). Therefore the above corollary implies the next one.

\begin{corollary}
\label{corollary-GeodFechadaEspfolhasNaoSimplesmenteConexo}
Let $\F$ be a s.r.f with closed embedded leaves on a complete Riemannian manifold $M$  and such that $M/\F$ is compact. Assume that $\F$ admits sections or $\F$ has no horizontal conjugate points. Also assume that the fundamental group of the topological space $M/\F$ is nontrivial. Then there exists a closed geodesic in the orbifold $M/\F$. 
\end{corollary}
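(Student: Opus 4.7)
The plan is to reduce the statement directly to Corollary \ref{corollary-orbifoldComapcto-grupofundamentalnaotrivial}, which already handles arbitrary compact Riemannian orbifolds with nontrivial topological fundamental group. The only new ingredient is a bridge from the singular Riemannian foliation setting to the orbifold setting, and this bridge is precisely the theorem of Lytchak and Thorbergsson invoked in the paragraph preceding the statement.

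First I would observe that under the standing hypotheses on $\F$---closed embedded leaves, complete ambient manifold, and either the existence of sections or the absence of horizontal conjugate points---the result of Lytchak and Thorbergsson \cite{LytchakThorbergsson2} asserts that $M/\F$ carries the structure of a Riemannian orbifold (in general, not a good one). Combining this with the hypothesis that $M/\F$ is compact, we get that $M/\F$ is a compact Riemannian orbifold, so it can be presented in the form $\Sigma/W$ of Definition \ref{definition-Riemannian-orbifold}.

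Next I would simply verify that the hypotheses of Corollary \ref{corollary-orbifoldComapcto-grupofundamentalnaotrivial} are met and apply it. The compactness is already in place; the remaining hypothesis is the nontriviality of $\pi_{1}(|\Sigma/W|)=\pi_{1}(|M/\F|)$, which is assumed. The corollary then produces a closed geodesic in the orbifold $\Sigma/W=M/\F$ (via Remark \ref{remark-superResumoAntigoApendice}, which reconstructs a compact manifold with a Riemannian foliation by compact leaves whose leaf space is $\Sigma/W$, and then the shortening-process result Theorem \ref{teo-RF-PeriodicGeodesic}(a)). This is exactly the conclusion we want.

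The main thing to check is really just that the notion of ``closed geodesic'' produced by the cited corollary, which refers back to Definition \ref{definition-ClosedGeodesicOrbifold}, agrees with the notion in the statement---and it does, since both stem from the same definition and both presentations of the orbifold structure on $M/\F$ are equivalent. Beyond this bookkeeping I do not expect any essential obstacle: the real work has already been carried out in Theorem \ref{teo-RF-PeriodicGeodesic} and Corollary \ref{corollary-orbifoldComapcto-grupofundamentalnaotrivial}, and the present corollary amounts to identifying the two settings through the Lytchak--Thorbergsson orbifold theorem.
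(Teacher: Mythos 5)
Your argument is exactly the one the paper gives: invoke the Lytchak--Thorbergsson theorem to conclude that $M/\F$ is a compact Riemannian orbifold, and then apply Corollary \ref{corollary-orbifoldComapcto-grupofundamentalnaotrivial}. Correct and the same approach.
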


\subsection{Shortening process}

\label{shortening} 

In this subsection we construct the shortening process with respect to a Riemannian foliation. We skip the proofs of lemmas since they follow from  standard arguments from the theory of foliations. 


\begin{remark}[Conventions]
\label{remark-convention-concatenation}
We will use two different concatenations of curves. We will denote by $*$ the curve obtained as the union of two curves $\alpha_1:[a,b]\rightarrow M$ and $\alpha_2:[b,c]\rightarrow M$, that is, the curve $\alpha_1*\alpha_2:[a,c]\rightarrow M$ that coincides with $\alpha_1$ in $[a,b]$ and with $\alpha_2$ in $[b,c]$. On the other hand, we will denote by $\star$ the concatenation of two curves $\alpha_1,\alpha_2:[a,b]\rightarrow M$, that is, the curve in $[a,b]$ such that $\alpha_2\star\alpha_1(s)=\alpha_1(2s-a)$ in $[a,a+(b-a)/2]$ and $\alpha_2\star\alpha_1(s)=\alpha_2(2s-b)$ in $[a+(b-a)/2,b]$. Moreover, given a curve $\alpha:[a,b]\rightarrow M$ we will denote by $\alpha^{-1}:[a,b]\rightarrow M$ the curve defined as $\alpha^{-1}(s)=\alpha(b+a-s)$.
\end{remark}


We also  need the notation below, which turns out to be very convenient  to  describe the curve shortening procedure.

\begin{definition}
\label{fclosed}
Let $\F$ be a Riemannian foliation on $(M,\metric)$ and  $\alpha$ and $\beta$ be two piecewise smooth curves $\alpha:[a,b]\rightarrow M$ and $\beta:[a,b]\rightarrow M$ such that the endpoints of $\alpha$ belong to the same leaf $L_{\alpha(a)}$ and the image of $\beta$ is contained in $L_{\alpha(a)}$ with $\beta(a)=\alpha(a)$ and $\beta(b)=\alpha(b)$. Then we say that a pair $(\alpha,\varphi_{[\beta]})$ is an $\F$-\emph{closed pair}, where $\varphi_{[\beta]}$ is the holonomy map in $\F$ associated to $\beta$.
In addition a pair $(\alpha,\varphi_{[\beta]})$ is called  $\F$-\emph{well closed pair} if in addition  $\alpha$ is regular in $\alpha(a)$ and $\alpha(b)$ and if $d\varphi_{[\beta]}df_{a}\alpha'(a)=d f_{b}\alpha'(b),$ where $f_{i}:\tub(P_{\alpha(i)})\rightarrow S_{\alpha(i)}$ is a submersion that describes the plaques in the neighborhood of $\alpha(i)$ for $i=a,b$. 
\end{definition}

Note that a horizontal periodic geodesic $\gamma$ is a well closed pair $(\gamma,\varphi_{[\beta]}).$

From now on, we assume that $\F$ is a Riemannian foliation with compact leaves on  a compact Riemannian manifold $M.$

We will see in the following that it is possible to assign a horizontal piecewise $\F$-periodic geodesic to a given $\F$-closed pair $(\alpha,\varphi_{[\beta]})$ (see Definition \ref{fclosed}). This process involves several  difficulties up to its definition. First we note that there exists a radius $\rho_0>0$ satisfying the following:
\begin{enumerate}
\item[ ({$ i$})] it is smaller than the injectivity radius of every point, 
\item[($ii$)] the balls $B(x,\rho_0)$ are always contained in a trivial neighborhood,
\item[($iii$)] there exists a unique minimizing horizontal geodesic between every point $x$ and every plaque for the trivial neighborhood of $(ii)$ at a distance lower than $\rho_0$.
\end{enumerate}

\subsubsection{$\hat{P}$-process}\label{P-process}

We are now ready to define the shortening process. Fix a real number $K>0$ and consider an $\F$-closed pair  $(\alpha,\varphi_{[\beta]})$ as in Definition \ref{fclosed} such that $E(\alpha)\leq K$. Given  a partition
\[a=l_{0}<l_{1}< \ldots < l_{k}=b\] such that $l_{i}-l_{i-1} < \frac{\rho^2_{0}}{K}$ for $i=1,\ldots, k$, Holder's inequality implies that
\begin{itemize}
\item $d(\alpha(l_{i-1}),\alpha(l_{i}))<\rho_0$, 
\item $\alpha|_{[l_{i-1},l_{i}]}$ is contained in a trivial neighborhood of $\F$,
\item there exists a unique minimizing horizontal geodesic $\tilde{\gamma}_i:[l_{i-1},l_{i}]\rightarrow M$ joining $\alpha(l_{i-1})$ and the plaque in the trivial neighborhood containing $\alpha(l_{i})$ and that satisfies $E(\tilde{\gamma}_i)\leq E(\alpha|_{[l_{i-1},l_{i}]})$.
\end{itemize}
Therefore, we can construct a piecewise ``disconnected'' horizontal geodesic from the curve $\alpha$. Now we will use the trivial holonomy in every trivial neighborhood to obtain a connected piecewise horizontal geodesic, so as a holonomy between the endpoints.

For $1\leq i \leq k$, let $\hat{\gamma}_{i,i}$ be the  minimizing segment of geodesic orthogonal to the plaque $P_{\alpha(l_{i})}$ such that $\hat{\gamma}_{i,i}(l_{i-1})=\alpha(l_{i-1})$ and $\hat{\gamma}_{i,i}(l_{i})\in P_{\alpha(l_{i})}$. Let $\hat{\beta}_{i,i}$ be a curve from $[l_{i-1},l_i]$ into $P_{\alpha(l_{i})}$ such that
$\hat{\beta}_{i,i}(l_{i-1})=\hat{\gamma}_{i,i}(l_{i})$ and $\hat{\beta}_{i,i}(l_i)=\alpha(l_{i})$.
Assume that $\hat{\beta}_{n-1,j}$ and $\hat{\gamma}_{n,j+1}$ are defined, then $\hat{\gamma}_{n,j}:=\|_{\hat{\beta}^{-1}_{n-1,j}}(\hat{\gamma}_{n,j+1})$  and $\hat{\beta}_{n,j}:=\eta(\hat{\gamma}_{n,j},\hat{\beta}_{n-1,j})$. Apply this process inductively for  $n=2,\dots,k$ and $j=n-1,\ldots,1$.
Finally define $\tilde{\beta}:=\hat{\beta}_{k,1}*\cdots*\hat{\beta}_{k,k}$, the piecewise horizontal geodesic $\hat{\gamma}=\hat{\gamma}_{1,1}*\hat{\gamma}_{2,1}\cdots*\hat{\gamma}_{k,1}$,  and the holonomy of  the endpoints by the curve $\hat{\beta}=\tilde{\beta}^{-1}\star\beta$. Summing up, given the $\F$-closed pair $(\alpha,\varphi_{[\beta]})$  and a family of nodes $a=l_{0}<l_{1}< \ldots < l_{k}=b$ such that $l_{i}-l_{i-1} < \frac{\rho^2_{0}}{K}$ for $i=1,\ldots, k$, we have obtained an $\F$-closed pair $\hat{P}(\alpha,\varphi_{[\beta]})=(\hat{\gamma},\varphi_{[\hat{\beta}]})$ such that $\hat{\gamma}$ is a piecewise horizontal geodesic with $E(\gamma)\leq K$ and $\hat{\beta}$ is a curve in $L_{\hat{\gamma}(a)}$ that joins the endpoints of $\hat{\gamma}$.

\subsubsection{The double shortening map}
\label{Subsec-double-shortening-map}
 As usual we will alternate two families of nodes in the shortening process to obtain a smooth curve in the limit. Choose two partitions $\{t_i\}$ and $\{\tau_i\}$ with $i=1,\ldots,k$ such that
\[\tau_0=\tau_k-1<t_{0}=0 < \tau_{1}< t_{1} <\tau_{2} <t_2< \ldots < \tau_{k} < t_{k}=1\] and $t_{i}-t_{i-1},  \tau_{i}-\tau_{i-1} < \frac{\rho^2_{0}}{K}$ for $i=1,\ldots, k.$ Given an $\F$-closed pair $(\alpha,\varphi_{[\beta]})$ as in the preceding subsection with $\alpha$ defined in $[0,1]$, we can apply the $\hat{P}$-process with the partition $0=t_0<\ldots<t_k=1$, obtaining a horizontal piecewise geodesic $\hat{\gamma}$ and a curve $\hat{\beta}$ in the leaf $L_{\hat{\gamma}(0)}$ joining the endpoints of $\hat{\gamma}$.  Now we can extend $\hat{\gamma}$ by parallel transport  to $[\tau_0,0]$ as follows: 
\begin{equation}
\label{Equation1-Subsec-double-shortening-map}
\hat{\gamma}(t):=||_{\hat{\beta}^{-1}}(\hat{\gamma}|_{[\tau_k,1]})(t+1).
\end{equation}
Moreover, we can bring the holonomy $\varphi_{[\hat{\beta}]}$ along $\hat{\gamma}|_{[\tau_0,0]}$ using the endpoint map $\eta$ (see Definition \ref{paralleltransport}) obtaining a holonomy $\varphi_{[\bar{\beta}]}$ in the leaf of $\hat{\gamma}(\tau_0)$  with $\bar{\beta}(0)=\hat{\gamma}(\tau_0)$ and $\bar{\beta}(1)=\hat{\gamma}(\tau_k)$. We can apply again the $\hat{P}$-process to the curve $\hat{\gamma}:[\tau_0,\tau_k]\rightarrow M$ and the holonomy $\varphi_{[\bar{\beta}]}$ obtaining $\hat{P}(\hat{\gamma},\varphi_{[\bar{\beta}]})=(\gamma_0,\varphi_{[\bar{\beta_0}]})$. 
Finally we extend the curve $\gamma_0$ to $[\tau_k,1]$ as
\begin{equation}
\label{Equation2-Subsec-double-shortening-map}
\gamma_0(t):=||_{{\bar\beta}_0}(\gamma_0|_{[\tau_0,0]})(t-1),
\end{equation}
and we consider in the leaf of $\gamma_0(0)$ the holonomy given by the endpoint map $\eta$ of $\bar{\beta_0}$ along $\gamma_0|_{[\tau_0,0]}$ obtaining an $\F$-closed pair $(\gamma_0,\varphi_{[\beta_0]})$.
 Therefore, we have obtained a double shortening map, that is, $P_0(\alpha,\varphi_{[\beta]})=(\gamma_0,\varphi_{[\beta_0]})$.


\subsubsection{Main propositions}

\begin{proposition}
\label{proposition-queda-energia-encurtamento}
 Let $(\alpha,\varphi_{[\beta]})$ be  an $\F$-closed pair  (with $\alpha:[0,1]\rightarrow M$) such that  $E(\alpha)\leq K$ and  
$P_{0}(\alpha,\varphi_{[\beta]})=(\gamma_0,\varphi_{[\beta_0]})$. Then $E(\gamma_{0}|_{[0,1]})\leq E(\alpha)$ with equality if and only if $\alpha$ is a horizontal periodic geodesic.
\end{proposition}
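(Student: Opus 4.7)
The plan is to trace the energy through each stage of the construction of $P_0$ and show that each stage either exactly preserves it (the parallel-transport extensions) or can only decrease it (the two $\hat{P}$-processes); the equality analysis then forces $\alpha$ to be a smooth horizontal geodesic satisfying the periodicity condition.

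First I would establish two monotonicity statements for a single $\hat{P}$-step applied to an $\F$-closed pair on a partition $a=l_0<\cdots<l_k=b$. By construction, $\hat{\gamma}_{i,i}$ is the unique minimizing horizontal geodesic from $\alpha(l_{i-1})$ to the plaque $P_{\alpha(l_i)}$; since $\F$ is described in the trivial neighborhood by a Riemannian submersion which is $1$-Lipschitz and sends horizontal geodesics to geodesics, combined with the Cauchy--Schwarz inequality (applied using that $\hat{\gamma}_{i,i}$ has constant speed on $[l_{i-1},l_i]$), we obtain
\[E(\hat{\gamma}_{i,i})\le E(\alpha|_{[l_{i-1},l_i]}),\]
with equality if and only if $\alpha|_{[l_{i-1},l_i]}$ is itself a minimizing horizontal geodesic ending at $\alpha(l_i)$, in which case $\hat{\beta}_{i,i}$ is constant. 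Secondly, because $\F$ is Riemannian, parallel transport of a horizontal geodesic along a curve lying in a leaf (via the Bott connection) preserves lengths and hence energies. Therefore each $\hat{\gamma}_{n,1}$, obtained from $\hat{\gamma}_{n,n}$ by iterated parallel transport, satisfies $E(\hat{\gamma}_{n,1})=E(\hat{\gamma}_{n,n})$, and summing gives $E(\hat{\gamma})\le E(\alpha)$.

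Next, I would apply the same invariance to the parallel-transport extensions defining $\hat{\gamma}$ on $[\tau_0,0]$ and $\gamma_0$ on $[\tau_k,1]$, obtaining
\[E(\hat{\gamma}|_{[\tau_0,\tau_k]})=E(\hat{\gamma}|_{[0,1]})\quad\text{and}\quad E(\gamma_0|_{[0,1]})=E(\gamma_0|_{[\tau_0,\tau_k]}).\]
Applying the monotonicity established above to the second $\hat{P}$-process on the partition $\{\tau_i\}$ yields $E(\gamma_0|_{[\tau_0,\tau_k]})\le E(\hat{\gamma}|_{[\tau_0,\tau_k]})$, and chaining the three displays gives the desired inequality $E(\gamma_0|_{[0,1]})\le E(\alpha)$.

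For the equality case, suppose $E(\gamma_0|_{[0,1]})=E(\alpha)$; then both $\hat{P}$-inequalities are equalities. Equality in the first forces $\alpha|_{[t_{i-1},t_i]}$ to be a minimizing horizontal geodesic ending at $\alpha(t_i)$ for each $i$; each $\hat{\beta}_{i,i}$ is then constant, the iterated parallel transports reduce to the identity, and $\hat{\gamma}=\alpha$ on $[0,1]$. Equality in the second, applied to $\hat{\gamma}=\alpha$ on each subinterval $[\tau_{i-1},\tau_i]$, forces $\alpha$ to be a horizontal geodesic there; since every partition point $t_j$ lies in the interior of some $[\tau_{i-1},\tau_i]$ by the interlacing $\tau_0<t_0<\tau_1<\cdots<\tau_k<t_k$, $\alpha$ has no corner at any $t_j$ and is therefore a smooth horizontal geodesic on all of $[0,1]$.

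The main obstacle will be extracting the holonomy-matching condition from the equality on the subinterval $[\tau_0,\tau_1]$, which straddles $t_0=0$. On $[\tau_0,0]$ the curve $\hat{\gamma}$ is by definition the parallel transport along $\hat{\beta}^{-1}$ of $\alpha|_{[\tau_k,1]}$; in the equality case the auxiliary closing curve $\tilde{\beta}$ is constant, so $\hat{\beta}$ represents the same holonomy class as $\beta$. Equality forces $\hat{\gamma}|_{[\tau_0,\tau_1]}$ to be a single horizontal geodesic and in particular smooth at $0$; this identifies $\alpha'(0)$ with the parallel transport of $\alpha'(1)$ along $\beta^{-1}$, which via Definition~\ref{definition-holonomy-map} is precisely the condition $d\varphi_{[\beta]}\alpha'(0)=\alpha'(1)$. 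Thus $\alpha$ is an $\F$-horizontal periodic geodesic, completing the proof.
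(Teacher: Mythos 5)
Your proof is correct and follows essentially the same route as the paper's: monotonicity of a single $\hat{P}$-step with a characterization of equality, invariance of energy under the Bott parallel-transport extension, chaining the two $\hat{P}$-steps over the interlacing partitions, and extracting the holonomy-matching condition from smoothness at $t=0$ within the interval $[\tau_0,\tau_1]$. The paper's own proof is considerably terser but relies on exactly this decomposition, so there is no genuine divergence in approach.
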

\begin{proof}
We have already observed in Subsection \ref{P-process} that a shortening $\hat{P}(\alpha,\varphi_{[\beta]})=(\hat{\gamma},\varphi_{[\hat{\beta}]})$  satisfies $E(\hat{\gamma})\leq E(\alpha)$. As the geodesic segments of $\hat{\gamma}$ are the unique minimizing geodesics joining the initial point with the plaque of the endpoint, the equality holds if and only if $\alpha$ is a piecewise geodesic with nodes $t_0,\ldots,t_{n-1}$, and in this case $\hat{\gamma}=\alpha$. In the $P_0$-process we apply twice the $\hat{P}$-process. As we change the nodes  and $E(\hat{\gamma}|_{[\tau_0,\tau_k]})=E(\hat{\gamma}|_{[0,1]})$, the energy of $\gamma_0$ remains the same if and only if $\alpha$ is a geodesic such that the extension to $[\tau_0,0]$ by the parallel transport along $\beta$ gives a geodesic $\gamma_0$ in $[\tau_0,1]$.
\end{proof}

In the following, we will say that a curve $\alpha:[a,b]\rightarrow M$ is {\it $\F$-closed} if the endpoints are in the same leaf of $\F$.  We say that two $\F$-closed curves are {\it $\F$-homotopic} if there exists a homotopy between them by $\F$-closed curves.

 The fact that the restriction of the considered curves to the partitions $\{t_{i}\}$ and $\{\tau_{i}\}$ are contained in trivial neighborhoods of $\F$ and the equifocality of $\F$ imply the next lemma. 

\begin{lemma}
\label{homotopia-base}
Let $(\alpha,\varphi_{[\beta]})$ be an $\F$-closed pair such that $E(\alpha)\leq K$ and  
$\hat{P}(\alpha,\varphi_{[\beta]})=(\hat{\gamma},\varphi_{[\hat{\beta}]})$. Then $\hat{\gamma}$ is $\F$-homotopic to $\alpha$.
\end{lemma}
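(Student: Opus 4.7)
The plan is to realize the passage from $\alpha$ to $\hat{\gamma}$ as the composition of two types of $\F$-homotopies: ``local'' homotopies inside the trivial neighborhoods covering the arcs $\alpha|_{[l_{i-1},l_i]}$, which replace each such arc by the horizontal geodesic $\hat\gamma_{i,i}$; and ``sliding'' homotopies along leaves, which realize the inductive parallel transports $\hat\gamma_{n,j+1}\mapsto\hat\gamma_{n,j}$ used in the construction of $\hat\gamma$ in Subsection~\ref{P-process}. The first type exploits the triviality of $\F$ in each small neighborhood $U_i$, and the second rests directly on the equifocality of $\F$.

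For the local step I would argue arc by arc. By the choice of $\rho_0$, each arc $\alpha|_{[l_{i-1},l_i]}$ lies in a trivial neighborhood $U_i$ with Riemannian submersion $f_i\colon U_i\to\sigma_i$ describing the plaques, and after shrinking $U_i$ one may assume that $f_i(U_i)$ is a convex geodesic ball in $\sigma_i$ and that $U_i\cong P\times f_i(U_i)$. In these coordinates $\alpha|_{[l_{i-1},l_i]}(t)=(\alpha_1(t),\alpha_2(t))$ and $\hat\gamma_{i,i}(t)=(\gamma_1(t),\gamma_2(t))$, with $\alpha_2(l_{i-1})=\gamma_2(l_{i-1})$ and $\alpha_2(l_i)=\gamma_2(l_i)$, the latter because $\hat\gamma_{i,i}$ ends in the plaque $P_{\alpha(l_i)}$. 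Interpolating $\alpha_2$ to $\gamma_2$ by the geodesic homotopy inside the convex $f_i(U_i)$, and independently interpolating $\alpha_1$ to $\gamma_1$ inside the connected plaque $P$ by any homotopy pinning only the left endpoint, yields a homotopy from $\alpha|_{[l_{i-1},l_i]}$ to $\hat\gamma_{i,i}$ whose initial point stays fixed at $\alpha(l_{i-1})$ and whose terminal point stays in $P_{\alpha(l_i)}\subset L_{\alpha(l_i)}$. Concatenating these homotopies over $i$ and inserting the leaf curves $\hat\beta_{i,i}$ at each interior node yields an $\F$-homotopy from $\alpha$ to the piecewise curve $\hat\gamma_{1,1}*\hat\beta_{1,1}*\hat\gamma_{2,2}*\hat\beta_{2,2}*\cdots*\hat\gamma_{k,k}$, all of whose intermediate curves are $\F$-closed because both overall endpoints remain on the common leaf $L_{\alpha(l_0)}=L_{\alpha(l_k)}$.

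For the sliding step I would apply equifocality directly. By Definition~\ref{paralleltransport}, each replacement $\hat\gamma_{n,j+1}\mapsto\hat\gamma_{n,j}=\|_{\hat\beta_{n-1,j}^{-1}}(\hat\gamma_{n,j+1})$ is obtained by parallel transporting the starting vector of $\hat\gamma_{n,j+1}$ along a leaf curve $\hat\beta_{n-1,j}^{-1}$ and re-exponentiating. The partial transports along the sub-curves $\hat\beta_{n-1,j}^{-1}|_{[0,s]}$ then form a smooth one-parameter family of horizontal geodesic segments whose endpoints trace out curves in single leaves of $\F$, while simultaneously $\hat\beta_{n-1,j}$ gets absorbed into the concatenation. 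Iterating this construction over all $(n,j)$ with $j<n$ deforms the piecewise curve of the previous paragraph into $\hat\gamma=\hat\gamma_{1,1}*\hat\gamma_{2,1}*\cdots*\hat\gamma_{k,1}$; composing with the local $\F$-homotopies of the preceding step gives the desired $\F$-homotopy from $\alpha$ to $\hat\gamma$. The main technical obstacle I anticipate is the bookkeeping required to check that the running curve is $\F$-closed at every instant of the global homotopy; this should reduce to the observation that each elementary move either fixes the initial point $\alpha(l_0)$ or displaces an overall endpoint along a single leaf, so by continuity both overall endpoints remain on $L_{\alpha(l_0)}$ throughout.
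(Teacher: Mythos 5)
Your proposal is correct and follows exactly the two ingredients the paper names (the trivial-neighborhood decomposition of the arcs and the equifocality/parallel-transport mechanism), which is precisely the "standard argument" the authors invoke without writing it out. The local step (homotoping each $\alpha|_{[l_{i-1},l_i]}$ to $\hat\gamma_{i,i}$ inside $U_i$ while letting the right endpoint move in the plaque, with the leaf curves $\hat\beta_{i,i}$ grown in gradually to keep the concatenation connected) and the sliding step (realizing each $\hat\gamma_{n,j+1}\mapsto\hat\gamma_{n,j}$ as a one-parameter family of transported horizontal segments) are exactly how the $\F$-homotopy should be built, and your remark that $\F$-closedness persists because the left endpoint is pinned and the right endpoint only moves within $L_{\alpha(l_0)}$ is the correct justification.
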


\begin{proposition}\label{proposition-homotopia-encurtamento-curvaoriginal}
Let $(\alpha,\varphi_{[\beta]})$ be an $\F$-closed pair (with $\alpha:[0,1]\rightarrow M$) such that $E(\alpha)\leq K$ and  
$P_{0}(\alpha,\varphi_{[\beta]})=(\gamma_0,\varphi_{[\beta_0]})$. Then $\gamma_0$ is $\F$-homotopic to $\alpha$.
\end{proposition}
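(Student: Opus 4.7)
The plan is to decompose $P_0$ into its four constituent operations---two applications of the $\hat{P}$-process and two parallel-transport extensions---and to track the $\F$-homotopy class through each one. Following Subsection \ref{Subsec-double-shortening-map}, $P_0(\alpha,\varphi_{[\beta]})$ is obtained by (i) the first $\hat{P}$-process on $(\alpha,\varphi_{[\beta]})$ with nodes $\{t_i\}$, yielding $(\hat\gamma|_{[0,1]},\varphi_{[\hat\beta]})$; (ii) the extension (\ref{Equation1-Subsec-double-shortening-map}) of $\hat\gamma$ to $[\tau_0,0]$ by parallel transport along $\hat\beta^{-1}$, giving $(\hat\gamma|_{[\tau_0,\tau_k]},\varphi_{[\bar\beta]})$; (iii) the second $\hat{P}$-process with nodes $\{\tau_i\}$, producing $(\gamma_0|_{[\tau_0,\tau_k]},\varphi_{[\bar\beta_0]})$; and (iv) the extension (\ref{Equation2-Subsec-double-shortening-map}) of $\gamma_0$ to $[\tau_k,1]$ by parallel transport along $\bar\beta_0$, giving $(\gamma_0|_{[0,1]},\varphi_{[\beta_0]})$.

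For steps (i) and (iii) the $\F$-homotopy is immediate from Lemma \ref{homotopia-base}. The real content is therefore to show that the parallel-transport extensions in steps (ii) and (iv) also preserve the $\F$-homotopy class. I would first prove the following auxiliary lemma: if $c$ is a horizontal piecewise geodesic, $\delta$ is a curve in the leaf $L_{c(0)}$ joining $c(0)$ to $c(1)$, and $\tilde c$ is the $\F$-closed curve obtained by replacing a tail of $c$ with its parallel transport along $\delta$ and readjusting the closing holonomy via the endpoint map $\eta$ of Definition \ref{paralleltransport}, then $\tilde c$ is $\F$-homotopic to $c$. The required homotopy is the sweep $r\mapsto \tilde c_r$, where $\tilde c_r$ uses the parallel transport along $\delta|_{[0,r]}$ together with the subpath of $\delta$ produced by $\eta$ to close up; equifocality of $\F$ and local flatness of the Bott connection guarantee that each $\tilde c_r$ is horizontal and $\F$-closed, and that the family depends continuously on $r$.

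Applied in step (ii) with $c=\hat\gamma|_{[0,1]}$ and $\delta=\hat\beta^{-1}$ (acting on the tail $\hat\gamma|_{[\tau_k,1]}$), this lemma shows that $\hat\gamma|_{[\tau_0,\tau_k]}$ is $\F$-homotopic to $\hat\gamma|_{[0,1]}$, and therefore by step (i) also to $\alpha$. Applied in step (iv) with $c=\gamma_0|_{[\tau_0,\tau_k]}$ and $\delta=\bar\beta_0$, it shows that $\gamma_0|_{[0,1]}$ is $\F$-homotopic to $\gamma_0|_{[\tau_0,\tau_k]}$, which by step (iii) and Lemma \ref{homotopia-base} is $\F$-homotopic to $\hat\gamma|_{[\tau_0,\tau_k]}$. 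Transitivity of $\F$-homotopy then yields the proposition.

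The main obstacle I anticipate is the bookkeeping of base points, leaves, and holonomy curves through the extensions: one must invoke equifocality to see that the parallel transport of a horizontal geodesic along a leaf curve produces another horizontal geodesic whose endpoints lie on the correct leaves, and verify that the holonomies associated to the endpoint map $\eta$ coincide with those given by the formulas in Subsection \ref{Subsec-double-shortening-map}. Once those identifications are in place, the argument reduces to a routine concatenation of $\F$-homotopies from Lemma \ref{homotopia-base} and the auxiliary parallel-transport lemma.
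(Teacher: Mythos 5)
Your proof is correct and follows essentially the same approach as the paper's. The paper likewise applies Lemma \ref{homotopia-base} twice (once to each $\hat{P}$-step) and disposes of the two parallel-transport extensions by noting that the $\F$-homotopy $H$ between $\hat\gamma|_{[\tau_0,\tau_k]}$ and $\gamma_0|_{[\tau_0,\tau_k]}$ "can be chosen to admit an extension to $[\tau_0,1]\times[0,1]$" by transporting horizontal segments, which directly yields $\hat\gamma|_{[0,1]}\sim\gamma_0|_{[0,1]}$; your version unpacks that single extension step into the two sweep homotopies (ii) and (iv) and then composes, which is a legitimate and arguably clearer bookkeeping of the same underlying idea (the sliding-interval sweep $s\mapsto\hat\gamma|_{[\tau_0 s,\,\tau_0 s+1]}$, made $\F$-closed by equifocality, is precisely the content of your auxiliary lemma).
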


\begin{proof}
By applying Lemma \ref{homotopia-base} we obtain that $\alpha$ is $\F$-homotopic to the first shortening $\hat{\gamma}$. As we extend $\hat{\gamma}$ with the holonomy $\varphi_{[\hat{\beta}]}$, we have that $\hat{\gamma}(t)$ and $\hat{\gamma}(t+1)$ are in the same   leaf for $t\in[\tau_0,0].$
It also follows from Lemma \ref{homotopia-base} that there exists a map $H$ (that we call $\F$ homotopy) defined in $[\tau_{0},\tau_{k}]\times[0,1]$ such that
\begin{enumerate}
\item $H(\tau_{k},s)\in L_{H(\tau_{0},s)}$ for each $s\in[0,1],$
\item $H(\cdot,0)=\hat{\gamma}|_{[\tau_{0},\tau_{k}]}$ and $H(\cdot,1)=\gamma_{0}|_{[\tau_{0},\tau_{k}]}$.
\end{enumerate} 
By transporting  horizontal  segments of geodesics, the $\F$-homotopy $H$ can be  chosen to admit an extension to $[\tau_{0},1]\times[0,1]$ and so that $H(1,s)\in L_{H(0,s)}$ for each $s$.  Therefore $\gamma_{0}|_{[0,1]}$ is $\F$-homotopic to $\hat{\gamma}|_{[0,1]}$ and hence $\F$-homotopic to $\alpha$.
\end{proof}

We will denote by $\Pi_1$ and $\Pi_2$ respectively the first and the second projections of an $\F$-closed pair. Given a closed curve, if nothing is said, we will assume that it is an $\F$-closed pair considering the trivial holonomy.

\begin{proposition}
\label{proposition-interecao-encurtamento}
Let $\alpha:[0,1]\rightarrow M$ be a closed curve with $E(\alpha)\leq K$. Then a subsequence of $\Pi_1\circ P_0^{n}\alpha$ converges uniformly to a (possibly trivial) horizontal periodic geodesic.
\end{proposition}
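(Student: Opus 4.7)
The plan is to run a Birkhoff-type curve-shortening argument exploiting the energy-monotonicity from Proposition \ref{proposition-queda-energia-encurtamento}. Set $(\alpha_n,\varphi_{[\beta_n]}):=P_0^n(\alpha,\mathrm{id})$ and $E_n:=E(\alpha_n)$. By Proposition \ref{proposition-queda-energia-encurtamento} the sequence $\{E_n\}$ is nonincreasing and bounded below by $0$, so it converges to some $E_\infty\in[0,K]$.

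First I would establish precompactness of the iterates. For $n\geq 1$ the curve $\alpha_n$ is a piecewise horizontal geodesic with $E(\alpha_n)\leq K$, and Hölder's inequality yields a uniform $\tfrac12$-Hölder estimate for the family $\{\alpha_n\}$. Since $M$ is compact, Arzel\`a--Ascoli extracts a uniformly convergent subsequence $\alpha_{n_k}\to\alpha_\infty$. Because the leaves are compact and each $\beta_n$ is a finite concatenation (with a bound on the number of pieces that is independent of $n$, set by the fixed partitions $\{t_i\}$, $\{\tau_i\}$) of parallel-transported curves whose lengths are controlled by $K$ and $\rho_0$, I can pass to a further subsequence along which $\beta_{n_k}\to\beta_\infty$, a loop in the leaf $L_{\alpha_\infty(0)}$.

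Next I would verify that the double-shortening map $P_0$ is continuous on uniformly convergent sequences of $\F$-closed pairs. The ingredients are (i) the existence and continuous dependence of the unique minimizing horizontal segment from a point to the plaque containing a nearby point, guaranteed by the condition $t_i-t_{i-1},\,\tau_i-\tau_{i-1}<\rho_0^2/K$ in the definition of $P_0$; (ii) the continuity of the Bott-parallel transport $\|_{(\cdot)}(\cdot)$ and of the endpoint map $\eta$ from Definition \ref{paralleltransport}; and (iii) the fact that the partitions $\{t_i\}, \{\tau_i\}$ are fixed once and for all. With these, $P_0(\alpha_{n_k},\varphi_{[\beta_{n_k}]})\to P_0(\alpha_\infty,\varphi_{[\beta_\infty]})$, so in particular $\alpha_{n_k+1}\to\alpha_\infty^\star:=\Pi_1\circ P_0(\alpha_\infty,\varphi_{[\beta_\infty]})$.

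Finally, since energy is continuous under uniform convergence of piecewise geodesics with bounded energy, $E(\alpha_\infty^\star)=\lim_k E(\alpha_{n_k+1})=E_\infty=E(\alpha_\infty)$. The equality clause of Proposition \ref{proposition-queda-energia-encurtamento} then forces $\alpha_\infty$ to be a horizontal periodic geodesic (trivial precisely when $E_\infty=0$). The main obstacle is the continuity of $P_0$: one must check that in the limit none of the intermediate nodes $\alpha_n(t_i)$ or $\alpha_n(\tau_i)$ drifts to the boundary of the trivial neighborhoods that govern the shortening, which is exactly what the uniform a priori bound $E_n\leq K$ together with the choice of partition below $\rho_0^2/K$ is designed to secure.
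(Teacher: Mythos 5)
The overall Birkhoff-shortening skeleton in your proposal matches the paper's proof: pass to a convergent subsequence of the piecewise horizontal geodesics, show energy limits give equality in Proposition~\ref{proposition-queda-energia-encurtamento}, conclude the limit is a horizontal periodic geodesic. The gap is in how you handle the holonomy data $\varphi_{[\beta_n]}$, and it is not a cosmetic one.

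You assert that each $\beta_n$ has a number of pieces that is ``independent of $n$, set by the fixed partitions.'' This is false. Looking at the $\hat P$-process, the new holonomy curve is $\hat\beta=\tilde\beta^{-1}\star\beta$, so each application of $\hat P$ appends $k$ new pieces to the previous holonomy curve, and the double shortening $P_0$ applies $\hat P$ twice (plus two endpoint-map transports). After $n$ iterations the holonomy curve $\beta_n$ has on the order of $2kn$ pieces and its length is unbounded. So neither Arzel\`a--Ascoli nor any naive compactness gives you $\beta_{n_k}\to\beta_\infty$ as curves, and without a limiting $\beta_\infty$ you cannot define $\alpha_\infty^\star=\Pi_1\circ P_0(\alpha_\infty,\varphi_{[\beta_\infty]})$, nor invoke ``continuity of $P_0$'' to pass to the limit. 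A further difficulty you do not address: the curves $\beta_n$ live in the \emph{varying} leaves $L_{\alpha_n(0)}$, not in a fixed one, so even convergence of representatives would not immediately produce a holonomy of the limit leaf $L_{\alpha_\infty(0)}$.

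The paper's resolution is the real content of the proof and is exactly what your proposal is missing. Using Molino's Lemma~3.7 they pick a saturated tubular neighborhood $\tub_\epsilon(L_{\gamma_0(0)})$ of the limit leaf and radially project the representatives $\beta_m$ into $L_{\gamma_0(0)}$, getting curves $\tilde\beta_m$ in a \emph{fixed} compact leaf. The decisive observation is then Lemma~\ref{lemma-gamma0-well-closed}: because the holonomy group of a compact leaf of a Riemannian foliation is finite, the holonomy maps $\varphi_{[\tilde\beta_m]}$ take only finitely many values, so along a subsequence $\varphi_{[\tilde\beta_{m_i}]}$ is literally constant, equal to some $\varphi_{[\beta_0]}$ making $(\gamma_0,\varphi_{[\beta_0]})$ a well closed pair. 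Only after this stabilization does the paper invoke the continuity statement (Lemma~\ref{lemma-continuidadeP0}, an energy version of the continuity of $P_0$) and close the argument via equality in Proposition~\ref{proposition-queda-energia-encurtamento}. You should replace your compactness claim for the $\beta_n$'s by a finiteness-of-holonomy argument of this type; the rest of your outline then goes through essentially as the paper's does.
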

\begin{proof}
Each curve $\Pi_1\circ P_0^{n}\alpha$ is a horizontal  periodic piecewise geodesic with nodes $\Pi_1\circ P_0^{n}\alpha(\tau_{1}),\ldots, \Pi_{1}\circ P_0^{n}\alpha(\tau_{k})$. Note that each such curve may be identified with a $k-$tuple $(\Pi_1\circ P_0^{n}\alpha(\tau_{1}),\ldots,\Pi_1\circ P_0^{n}\alpha(\tau_{k}))\in M^{k}:=M\times\ldots\times M.$ Since $M^{k}$ is compact, a subsequence of these nodes converges to some $(p_{1},\ldots,p_{k})\in M^{k}$ and by the continuity of the exponential map, a subsequence $\gamma_m$ of $\Pi_1\circ P_0^{n}\alpha$ converges uniformly towards the horizontal piecewise geodesic $\gamma_{0}$ with nodes $\gamma_{0}(\tau_{i})=p_{i}$ and such that
\[(\gamma_{m+1},\varphi_{[\beta_{m+1}]})=P_0^{\mu(m)}(\gamma_m,\varphi_{[\beta_m]})\]
with $\mu(m)\geq 1$. 

We will see that the holonomies $\varphi_{[\beta_m]}$ admit a ``constant'' subsequence in a certain sense. 

According to  Molino \cite[Lemma 3.7]{Molino} we can choose a   radius $\epsilon<1$ so that: 
\begin{itemize}
\item The tubular neighborhood $\tub_{\epsilon}(L_{\gamma_{0}(0)})$ is saturated by leaves;
\item for all $x\in L_{\gamma_{0}(0)}$ the slice $S_{x}$ (of radius $\epsilon$), defined as 
\[S_{x}:=\{\exp_{x}(\xi)| \xi\in \nu P_{x}, \|\xi\|<\epsilon\},\] is transversal to the foliation; 
\item if $\tilde{L}$ is a leaf in $\tub_{\epsilon}(L_{\gamma_{0}(0)})$ then the points of $\tilde{L}$ are all at the same distance from $L;$
\item for each $x\in L_{\gamma_{0}(0)}$ there exists a plaque $P_{x}$ such that $\pi^{-1}(P_{x})$ is a simple open set, where $\pi:\tub_{\epsilon}(L_{\gamma_{0}(0)})\rightarrow L_{\gamma_{0}(0)}$ is the
radial projection. 
\end{itemize}

Choose $N_{0}$ such that if $m>N_{0}$ then
$L_{\gamma_{m}(0)}\subset \tub_{\frac{\epsilon}{2}}(L_{\gamma_{0}(0)}).$ Let $\beta_{m}$ be a representative for the holonomy class $\Pi_{2}\circ P_0^{m}\alpha \in L_{\gamma_{m}(0)}$  and define $\tilde{\beta}_{m}:=\pi(\beta_{m})\in L_{\gamma_{0}(0)}$, where $\pi:\tub_{\frac{\epsilon}{2}}(L_{\gamma_{0}(0)})\rightarrow L_{\gamma_{0}(0)}$ is the radial projection.


Our choice of $\epsilon$, the fact that the holonomy group of each leaf is finite and properties of the holonomy maps imply the next lemma.

\begin{lemma}
\label{lemma-gamma0-well-closed}
In the above situation:
\begin{enumerate}
\item[(a)]there exists a holonomy $\varphi_{[\beta_{0}]}$ in $L_{\gamma_{0}(0)}$ such that $(\gamma_{0},\varphi_{[\beta_{0}]})$ is a well closed pair,
\item[(b)] 
 there exists a subsequence $\tilde{\beta}_{m_{i}}$ such that  $\varphi_{[\beta_{0}]}=\varphi_{[\tilde{\beta}_{m_{i}}]}:S_{\gamma_{0}(0)}\rightarrow S_{\gamma_{0}(1)}.$
\end{enumerate}
\end{lemma}

For the sake of simplicity we will still denote the subsequence $m_i$ by $m$. 
It is easy to see that 
\begin{equation}\label{formula-energia}
E(\gamma_m)=\sum_{i=1}^{k}\frac{\dist(\gamma_m(\tau_{i-1}),\gamma_m(\tau_i))^2}{2(\tau_i-\tau_{i-1})},
\end{equation}
and then ${\lim}_{m\rightarrow\infty}E(\gamma_m)=E(\gamma_0).$ Therefore

\begin{align*}
E(\gamma_0)=&\lim_{m\rightarrow\infty}E(\gamma_{m+1})
=\lim_{m\rightarrow\infty}E\big(\Pi_1\circ P_0^{\mu(m)}(\gamma_{m},\varphi_{[\beta_m]})\big)\\
\leq &\lim_{m\rightarrow\infty}E\big(\Pi_1\circ P_0(\gamma_{m},\varphi_{[\beta_m]})\big)
\leq  \lim_{m\rightarrow\infty}E(\gamma_m)
= E(\gamma_0),
\end{align*}
where we have used Proposition \ref{proposition-queda-energia-encurtamento}.  
We conclude from the above equality that
\begin{equation}
\label{Eq-Fundamental-proposition-interecao-encurtamento}
\lim_{m\rightarrow\infty}E\big(\Pi_1\circ P_0(\gamma_m,\varphi_{[\beta_m]})\big)=E(\gamma_0).
\end{equation}
The fact that minimal segments of geodesics depend smoothly on  their endpoints, $\varphi_{[\beta_{0}]}=\varphi_{[\tilde{\beta}_{m}]}$ and that the energy is not changed by parallel transport of horizontal segments imply the next lemma.

\begin{lemma}
\label{lemma-continuidadeP0}
 $E\big(\Pi_1\circ P_0(\gamma_m,\varphi_{[\beta_m]})\big)$  converges to $E\big(\Pi_1\circ P_0(\gamma_0,\varphi_{[\beta_{0}]})\big)$.\qed
\end{lemma}


Lemma \ref{lemma-continuidadeP0} and \eqref{Eq-Fundamental-proposition-interecao-encurtamento}   imply 
\[E(\Pi_1\circ P_{0}(\gamma_0,\varphi_{[\beta_0]}))=\lim_{m\rightarrow\infty}E(\Pi_1\circ P_0(\gamma_m,\varphi_{[\beta_m]}))=E(\gamma_0),
\]
and from Proposition \ref{proposition-queda-energia-encurtamento} we  conclude that $\gamma_0$ is a horizontal periodic geodesic. 


\end{proof}

\begin{remark}
Shortening process and the above results can be straightforwardly generalized to Riemannian foliations with closed embedded leaves on a complete Riemannian  manifold. In this case Proposition \ref{proposition-interecao-encurtamento} should be reformulate as follows:
\emph{
Let $\alpha:[0,1]\rightarrow M$ be a closed curve with $E(\alpha)\leq K$. 
Then there exist a subsequence $\{\gamma_{m}\}$ of $\Pi_1\circ P_0^{n}\alpha$ and  a sequence $\{ k_{m}\}$ of holonomies of $L_{\gamma_{m}(0)}$ such that
$\{k_{m}\gamma_{m}\}$ converges   uniformly to a (possibly trivial) horizontal periodic geodesic.
}
\end{remark}


\subsection{Proof of the Theorem \ref{teo-RF-PeriodicGeodesic} }

\

(a)  let $\alpha$ be a loop of $M$ that is not free homotopic to any  loop contained in any leaf of $\F$.   
According to Proposition \ref{proposition-interecao-encurtamento} there exists a subsequence $\{\gamma_{n}\}$ of $\Pi_{1}P_{0}^{m}(\alpha)$ that converges to a (possibly trivial) horizontal geodesic $\gamma^{0}$.  Assume by contradiction that $\gamma^{0}=y$, i.e., that $\gamma^{0}$ is trivial.  

Consider $n$ big enough such that $\beta_{n}$ and $\gamma_{n}$ are in the same tubular neighborhood $\tub(L_{y})$. By the radial projection in the axis $L_{y}$ we can construct a curve $\hat{\delta}\subset L_{y}$ such that $\hat{\delta}$ is free homotopic to $\beta_{n}\star\gamma_{n}$. Since $\beta_{n}\star\gamma_{n}$ is free homotopic to $\alpha$  (see Proposition \ref{proposition-homotopia-encurtamento-curvaoriginal}), we conclude that
$\alpha$ is free homotopic to $\hat{\delta}.$ This contradicts the hypothesis of item (a). Therefore   
$\gamma^{0}$ is a  nontrivial horizontal periodic geodesic.


(b) Consider a sequence of loops $[\alpha_{n}]\in\pi_{1}(M,x_{0})$ such that $\alpha_i$ is not free homotopic to $\alpha_j$. 
It follows from Proposition \ref{proposition-interecao-encurtamento} that, for each $i$ there exists a subsequence $\{ \gamma_{n}^{i}\}$ of $\Pi_{1}P_{0}^{m}(\alpha_{i})$ that converges to a (possibly trivial) horizontal geodesic $\gamma^{i}$.  Assume by contradiction that  $\gamma^{i}$ is a point $y_{i}$ for each $i\in\mathbb{N}$ .

For a fixed $i$ consider $n$ big enough such that $\beta_{n}$ and $\gamma_{n}^{i}$ are in the same tubular neighborhood $\tub(L_{y_{i}})$.  
By the radial projection in the axis $L_{y_i}$ we can construct a curve $\hat{\delta}_{i}\subset L_{y_{i}}$ such that $\hat{\delta}_{i}$ is free homotopic to $\beta_{n}\star\gamma_{n}^{i}$. Since $\beta_{n}\star\gamma_{n}^{i}$ is free homotopic to $\alpha_{i}$ we conclude that
$\alpha_{i}$ is free homotopic to $\hat{\delta}_{i}.$ 
Since the sequence $\{y_{i}\}$ is contained in the compact space $M$, there exists $y\in M$ and a subsequence (that we also denote by $\{y_{i}\}$) such that $y_{i}\rightarrow y.$ Therefore, for  $i$ big enough, we have that
$\hat{\delta}_{i}\subset L_{y_{i}}\subset \tub(L_{y})$. By the radial projection in the axis $L_{y}$ we can construct a curve $\tilde{\delta}_{i}\subset L_{y}$ such that $\tilde{\delta}_{i}$ is free homotopic to $\hat{\delta}_{i}$ and $\tilde{\delta}_{i}(0)=y$.
Since $\alpha_{i}$ is free homotopic to $\hat{\delta}_{i}$ and $\hat{\delta}_{i}$ is free homotopic to $\tilde{\delta}_{i}$ we conclude that
$\alpha_{i}$ is free homotopic to $\tilde{\delta}_{i}$.
On the other hand the cardinality of $\pi_{1}(L,y)$ is finite, and  we conclude (for a subsequence) that $\tilde{\delta}_{i}$ is homotopic to $\tilde{\delta}_{j}$. Hence $\alpha_{i}$ is free homotopic to $\alpha_{j}$ and this contradicts the hypothesis of item (b).
Therefore, there exists  some $i_{0}$ so that   $\gamma^{i_{0}}$ is a nontrivial horizontal periodic geodesic.



\section{Appendix}
In this appendix we present the shortening process in the special case of  Riemannian good orbifolds and give another proof of  
Theorem \ref{theorem-ClosedGeodesicGoodOrbifold}.


\subsection{Shortening process}

\label{shortening-orbifold} 

In this subsection we construct the shortening process in the good orbifold.  From now on, we assume that $\Sigma$  is a connected complete Riemannian manifold and $W$ is an infinite discrete subgroup of isometries of $\Sigma$ such that $\Sigma/W$ is compact.  We also assume that  the action of $W $ on $\Sigma$ is proper.

  Since $\Sigma/W$ is compact, there exists  a radius $\rho_0>0$ such that for each $x\in \Sigma$ and $q\in B_{\rho_{0}}(x)$, the shortest segment of geodesic from $x$ to $q$ is unique. 

We still use the conventions of Remark \ref{remark-convention-concatenation}.

\subsubsection{$\hat{P}$-process}
\label{P-process-orbifold}

 Fix a real number $K>0$ and consider a curve $\alpha:[a,b]\rightarrow \Sigma$ and an element $w^{0}\in W$ so that
 $\alpha(b)=w^{0}\alpha(a)$ and   $E(\alpha)\leq K$. Given  a partition
\[a=l_{0}<l_{1}< \ldots < l_{k}=b\] such that $l_{i}-l_{i-1} < \frac{\rho^2_{0}}{K}$ for $i=1,\ldots, k$, Holder's inequality implies that
\begin{itemize}
\item $d(\alpha(l_{i-1}),\alpha(l_{i}))<\rho_0$, 
\item there exists a unique minimizing  geodesic $\tilde{\gamma}_i:[l_{i-1},l_i]\rightarrow\Sigma$ joining $\alpha(l_{i-1})$ and  $\alpha(l_{i})$ and that satisfies $E(\tilde{\gamma}_i)\leq E(\alpha|_{[l_{i-1},l_i]})$.
\end{itemize}
Therefore, we can construct a piecewise  geodesic from the curve $\alpha$ as $\hat{P}(\alpha)=\gamma_{1}*\cdots*\gamma_{k}$. Note that $\hat{P}(\alpha)(b)=w^{0}\hat{P}(\alpha)(a)$.

\subsubsection{The double shortening map}
\label{Subsec-double-shortening-map-orbifold}
 As usual we will alternate two families of nodes in the shortening process to obtain a smooth curve in the limit. Choose two partitions $\{t_i\}$ and $\{\tau_i\}$ with $i=1,\ldots,k$ such that
\[\tau_0=\tau_k-1<t_{0}=0 < \tau_{1}< t_{1} <\tau_{2} <t_2< \ldots < \tau_{k} < t_{k}=1\] and $t_{i}-t_{i-1},  \tau_{i}-\tau_{i-1} < \frac{\rho^2_{0}}{K}$ for $i=1,\ldots, k.$ Given a  pair $(\alpha,w^{0})$ as in the preceding subsubsection with $\alpha$ defined in $[0,1]$, we can apply the $\hat{P}$-process with the partition $0=t_0<\ldots<t_k=1$, obtaining a  piecewise geodesic $\hat{\gamma}$ with $\hat{\gamma}(1)=w^{0}\hat{\gamma}(0)$.

  Now we can extend $\hat{\gamma}$   to $[\tau_0,0]$ as follows: 
\begin{equation}
\label{Equation1-Subsec-double-shortening-map-orbifold}
\hat{\gamma}(t):=(w^{0})^{-1}(\hat{\gamma}|_{[\tau_k,1]})(t+1).
\end{equation}
 We can apply again the $\hat{P}$-process to the curve $\hat{\gamma}:[\tau_0,\tau_k]\rightarrow \Sigma$  obtaining a curve $\gamma_{0}:[\tau_0,\tau_k]\rightarrow \Sigma$ such that $\gamma_{0}(\tau_{1})=w^{0}\gamma_{0}(\tau_{0})$. 
Finally we extend the curve $\gamma_0$ to $[\tau_k,1]$ as
\begin{equation}
\label{Equation2-Subsec-double-shortening-map-orbifold}
\gamma_0(t):={w}^{0}(\gamma_0|_{[\tau_0,0]})(t-1).
\end{equation}
Therefore, we have obtained a double shortening map, that is, $P_0(\alpha)=\gamma_0$, 
where $\gamma_{0}:[\tau_{0},1]\rightarrow \Sigma$ and  $\gamma_{0}(t+1)=w^{0}\gamma_{0}(t)$ for $t\in[\tau_{0},0]$.

We can simplify the arguments of the last section and prove the next two propositions. 

\begin{proposition}
\label{proposition-queda-energia-encurtamento-orbifold}
 Let $\alpha:[0,1]\rightarrow \Sigma$ be a curve in $\Sigma$ such that $\alpha(1)=w^{0}\alpha(0)$ and  $E(\alpha)\leq K$. Set
 $\gamma_{0}:=P_{0}(\alpha)$. Then $E(\gamma_{0}|_{[0,1]})\leq E(\alpha)$ with equality if and only if $\alpha$ is a closed  geodesic of $\Sigma/W$.
 \end{proposition}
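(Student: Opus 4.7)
The plan is to mirror the proof of Proposition \ref{proposition-queda-energia-encurtamento}, but in the considerably simpler setting of a good orbifold, where the holonomy is replaced by the single global isometry $w^{0}$. The $\hat{P}$-process here is the classical one: on each subinterval $[l_{i-1},l_{i}]$ of the partition, $\alpha|_{[l_{i-1},l_{i}]}$ is replaced by the unique minimizing geodesic between its endpoints, so by the choice of $\rho_{0}$ and Hölder's inequality we have $E(\tilde\gamma_{i})\leq E(\alpha|_{[l_{i-1},l_{i}]})$, with equality if and only if $\alpha|_{[l_{i-1},l_{i}]}=\tilde\gamma_{i}$. Summing over $i$ gives $E(\hat{P}(\alpha))\leq E(\alpha)$, with equality if and only if $\alpha$ itself is the piecewise geodesic with nodes $\{l_{i}\}$.

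Next, I would track the energy through the two applications of $\hat{P}$ that make up $P_{0}$. Let $\hat\gamma=\hat{P}(\alpha)$ built with the nodes $\{t_{i}\}$, so $E(\hat\gamma)\leq E(\alpha)$ and $\hat\gamma(1)=w^{0}\hat\gamma(0)$. The extension to $[\tau_{0},0]$ defined by \eqref{Equation1-Subsec-double-shortening-map-orbifold} is obtained by applying the isometry $(w^{0})^{-1}$ to $\hat\gamma|_{[\tau_{k},1]}$, so it preserves energy and yields $E(\hat\gamma|_{[\tau_{0},\tau_{k}]})=E(\hat\gamma|_{[0,1]})$. Applying $\hat{P}$ again with the nodes $\{\tau_{i}\}$ produces $\gamma_{0}|_{[\tau_{0},\tau_{k}]}$ with $E(\gamma_{0}|_{[\tau_{0},\tau_{k}]})\leq E(\hat\gamma|_{[\tau_{0},\tau_{k}]})$. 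Finally, the extension to $[\tau_{k},1]$ defined by \eqref{Equation2-Subsec-double-shortening-map-orbifold} is also by the isometry $w^{0}$, so
\[E(\gamma_{0}|_{[0,1]})=E(\gamma_{0}|_{[\tau_{0},\tau_{k}]})\leq E(\hat\gamma|_{[0,1]})\leq E(\alpha),\]
which is the desired inequality.

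For the equality case, I would chase through when both inequalities above can be equalities. Equality in the first step forces $\alpha=\hat\gamma$, i.e., $\alpha$ is a piecewise geodesic on $[0,1]$ with (possible) breaks only at the nodes $t_{1},\ldots,t_{k-1}$. Equality in the second step then forces the extended curve $\hat\gamma:[\tau_{0},\tau_{k}]\to\Sigma$ to be a piecewise geodesic with breaks only at $\tau_{1},\ldots,\tau_{k-1}$. Because the partitions $\{t_{i}\}$ and $\{\tau_{i}\}$ interlace strictly, the only potential break points of $\hat\gamma$ in the interior of either partition are absent, which forces $\hat\gamma$ to be a genuinely smooth geodesic on each of $[0,1]$ and $[\tau_{0},\tau_{k}]$ and hence, by overlap, on all of $[\tau_{0},1]$. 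In particular $\alpha=\hat\gamma|_{[0,1]}$ is a smooth geodesic, and since $\hat\gamma$ is smooth across $t=0$ while $\hat\gamma|_{[\tau_{0},0]}(t)=(w^{0})^{-1}\hat\gamma(t+1)$, the matching conditions at $t=0$ read
\[\hat\gamma(0)=(w^{0})^{-1}\hat\gamma(1),\qquad \hat\gamma'(0)=d(w^{0})^{-1}\hat\gamma'(1),\]
which are exactly the conditions of Definition \ref{definition-ClosedGeodesicOrbifold} for a closed geodesic of $\Sigma/W$ (with $n=1$ and $w_{1}=w^{0}$). Conversely, if $\alpha$ is already a closed geodesic of $\Sigma/W$ with $w^{0}\alpha(0)=\alpha(1)$ and $dw^{0}\alpha'(0)=\alpha'(1)$, every replacement in either $\hat{P}$-step is by the same geodesic, so no energy is lost.

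The only delicate point, and thus the main obstacle, is the equality case: one has to verify that the joint condition ``piecewise geodesic with respect to $\{t_{i}\}$'' and ``piecewise geodesic with respect to $\{\tau_{i}\}$ after extension'' genuinely upgrades $\alpha$ to a smooth geodesic and transfers the smoothness at the endpoint identification into the matching conditions of Definition \ref{definition-ClosedGeodesicOrbifold}. Everything else is a direct, and strictly simpler, version of the argument already given for Proposition \ref{proposition-queda-energia-encurtamento}, with the isometry $w^{0}$ playing the role of the holonomy map.
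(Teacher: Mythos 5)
Your proof is correct and takes the approach the paper intends: the paper does not spell out a proof for this proposition, saying only that it follows by simplifying the arguments of Section~3, and your argument is exactly the orbifold specialization of the paper's proof of Proposition~\ref{proposition-queda-energia-encurtamento}, with the isometry $w^{0}$ replacing the holonomy map and Bott parallel transport. The equality case, in particular the interlacing-of-nodes argument showing that ``piecewise geodesic with respect to $\{t_i\}$'' combined with ``piecewise geodesic with respect to $\{\tau_i\}$ after the $(w^0)^{-1}$-extension'' upgrades $\alpha$ to a smooth geodesic satisfying $\alpha'(1)=dw^0\alpha'(0)$, is spelled out more carefully than the paper's terse remark and is sound.
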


\begin{proposition}
\label{proposition-interecao-encurtamento-orbifold}
Let $\alpha:[0,1]\rightarrow \Sigma$ be a  curve with $\alpha(1)=w^{0}\alpha(0)$ and $E(\alpha)\leq K$. 
Then there exist a subsequence $\{\gamma_{m}\}$ of  $\gamma_{n}= P_0^{n}\alpha$ and a sequence $\{ k_{m}\}$ in $W$ such that \{$k_{m}\gamma_{m}\}$ converges to a closed geodesic $(\gamma_{0},w)$. In other words, a subsequence of  classes of $\{ P_0^{n}\alpha\}$ converges uniformly to a (possibly trivial) closed geodesic of $\Sigma/W$.
\end{proposition}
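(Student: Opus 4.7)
The plan is to adapt the proof of Proposition \ref{proposition-interecao-encurtamento}, with two new ingredients: translating by $W$-elements to offset the non-compactness of $\Sigma$, and using properness plus discreteness of the $W$-action to identify a single closing element in the limit.

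First, by iterating Proposition \ref{proposition-queda-energia-encurtamento-orbifold}, the energies $E(\gamma_n)$ form a non-increasing sequence bounded by $K$ and converge to some $E_\infty\geq 0$. Each $\gamma_n$ is a piecewise geodesic determined by its nodes $(\gamma_n(\tau_1),\ldots,\gamma_n(\tau_k))$, and H\"older's inequality gives
\[
d(\gamma_n(\tau_{i-1}),\gamma_n(\tau_i))^2 \leq 2K(\tau_i-\tau_{i-1}).
\]
Since $\Sigma/W$ is compact, I choose $k_n\in W$ so that $k_n\gamma_n(\tau_1)$ lies in a fixed compact fundamental region $F\subset\Sigma$. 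The distance bound then forces every node $k_n\gamma_n(\tau_i)$ to lie in a common compact set $\tilde F\supset F$. Passing to a subsequence I obtain $k_m\gamma_m(\tau_i)\to p_i$ for each $i$, and continuity of the exponential map yields uniform convergence $k_m\gamma_m\to\gamma^0$, where $\gamma^0$ is the piecewise geodesic through the nodes $p_i$.

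Next I produce $w\in W$ so that $\gamma^0(1)=w\gamma^0(0)$. Since $\gamma_n(1)=w^0\gamma_n(0)$, one has $k_m\gamma_m(1)=w_m(k_m\gamma_m(0))$ with $w_m:=k_m w^0 k_m^{-1}\in W$. Both $k_m\gamma_m(0)$ and $k_m\gamma_m(1)$ lie in $\tilde F$; properness of the $W$-action implies that $\{w_m\}$ is precompact in $W$, and discreteness then makes it finite. Hence, along a further subsequence, $w_m=w$ is constant for some $w\in W$, and in the limit $\gamma^0(1)=w\gamma^0(0)$. Note moreover that $P_0$ is $W$-equivariant, $P_0(k\alpha, kw^0k^{-1})=k\,P_0(\alpha,w^0)$, since $W$ acts by isometries preserving uniqueness of minimizing geodesics.

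Finally, to upgrade $\gamma^0$ to a closed geodesic I run the energy argument of Proposition \ref{proposition-interecao-encurtamento}. By isometric invariance $E(k_m\gamma_m)=E(\gamma_m)\to E_\infty$, and continuity of energy under uniform convergence of piecewise geodesics gives $E(\gamma^0)=E_\infty$. Equivariance yields $P_0(k_m\gamma_m,w)=k_m P_0(\gamma_m,w^0)=k_m\gamma_{n_m+1}$ (where $\gamma_m=P_0^{n_m}\alpha$), whose energy equals $E(\gamma_{n_m+1})\to E_\infty$. On the other hand, continuity of $P_0$ around $(\gamma^0,w)$---straightforward in the absence of holonomies---gives $E(P_0(\gamma^0,w))=\lim E(P_0(k_m\gamma_m,w))=E_\infty=E(\gamma^0)$. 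By Proposition \ref{proposition-queda-energia-encurtamento-orbifold} this equality forces $(\gamma^0,w)$ to be a closed geodesic of $\Sigma/W$. The main obstacle lies in the second step: extracting a single closing element in the limit, which is precisely where properness and discreteness of the action are both essential.
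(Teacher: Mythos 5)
Your proof is correct and supplies the argument the paper leaves implicit, since for this proposition the paper only remarks that ``we can simplify the arguments of the last section.'' You correctly identify the two new ingredients needed over Proposition \ref{proposition-interecao-encurtamento}: translating by $W$-elements so the nodes stay in a compact subset of the (non-compact) $\Sigma$, and invoking properness together with discreteness of $W$ to stabilize the conjugated closing element $w_m=k_m w^0 k_m^{-1}$ along a subsequence, after which the $W$-equivariance of $P_0$ and the usual energy/continuity argument close the proof exactly as in the foliation case.
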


\subsection{Proof of Theorem \ref{theorem-ClosedGeodesicGoodOrbifold}}

 Using shortening process and  standard properties of proper actions 
 we provide another proof of Theorem \ref{theorem-ClosedGeodesicGoodOrbifold} that we reformulate as follows.

\begin{theorem}
\label{theorem-ClosedGeodesicGoodOrbifold2}
Let $\Sigma$ be a connected complete Riemannian manifold and $W$ be an infinite discrete subgroup of isometries of $\Sigma$ whose action on $\Sigma$ is proper and such  that the good orbifold $\Sigma/W$ is compact.  
Assume  that there exists an element $w^{0}\in W$ that does not fix points (e.g. $w^{0}$ has infinite order).
Then for each curve $\alpha$ joining a point $p$ to $w^{0}p$, a subsequence of classes of iterations of double shortening of $\alpha$  converges to a nontrivial closed geodesic in $\Sigma/W.$ 
\end{theorem}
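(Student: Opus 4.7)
The plan is to invoke Proposition \ref{proposition-interecao-encurtamento-orbifold} and verify that the limit geodesic it produces is necessarily nontrivial. First I would apply that proposition to obtain a subsequence $\{\gamma_m\}$ of the iterates $\{P_0^n(\alpha)\}$ together with a sequence $\{k_m\}\subset W$ such that $k_m\gamma_m$ converges uniformly to a closed geodesic $(\gamma_0,w)$ of $\Sigma/W$. Since the statement of the theorem only asks for the class to converge to a nontrivial closed geodesic, the entire content of the proof reduces to ruling out the possibility that $\gamma_0$ is a constant curve.

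To exploit the hypothesis on $w^0$, I would use that by construction of the double shortening map (equation \eqref{Equation2-Subsec-double-shortening-map-orbifold}) each iterate satisfies $\gamma_m(1)=w^0\gamma_m(0)$, and therefore
\[
k_m\gamma_m(1)=(k_m w^0 k_m^{-1})(k_m\gamma_m(0)).
\]
Setting $x_m:=k_m\gamma_m(0)$ and $g_m:=k_m w^0 k_m^{-1}$, uniform convergence gives $x_m\to\gamma_0(0)$ and $g_m x_m\to\gamma_0(1)$. Assuming for contradiction that $\gamma_0$ is constant with value $y\in\Sigma$, both sequences would then converge to $y$.

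The crux of the argument, and the main obstacle, is a properness argument in the spirit of the original proof of Theorem \ref{theorem-ClosedGeodesicGoodOrbifold}. Both $\{x_m\}$ and $\{g_m x_m\}$ eventually lie in a compact neighborhood of $y$, so by properness of the $W$-action the sequence $\{g_m\}$ is precompact in $W$; since $W$ is discrete, a subsequence $g_{m_i}$ stabilizes to some $g\in W$, and passing to the limit in $g_{m_i}x_{m_i}\to y$ yields $g\cdot y=y$. But $g=k_{m_i}w^0 k_{m_i}^{-1}$ for $i$ large, so $w^0$ fixes the point $k_{m_i}^{-1}y$, contradicting the hypothesis. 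Hence $\gamma_0$ must be a nontrivial closed geodesic of $\Sigma/W$, and the theorem follows. The remaining technical verifications (precompactness of $\{x_m\}$ and continuity of the evaluation maps under uniform convergence) are routine given the machinery of Subsection \ref{shortening-orbifold}.
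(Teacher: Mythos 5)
Your proof is correct and follows essentially the same strategy as the paper: invoke Proposition \ref{proposition-interecao-encurtamento-orbifold}, assume the limit geodesic is constant, conjugate $w^0$ by the $k_m$ to produce a sequence in $W$ whose convergence is forced by properness and which then stabilizes by discreteness, yielding a fixed point of $w^0$. The only cosmetic difference is that you conjugate $w^0$ itself ($g_m = k_m w^0 k_m^{-1}$) while the paper conjugates $(w^0)^{-1}$; both lead to the same contradiction.
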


\begin{proof} 

Let $\alpha:[0,1]\rightarrow \Sigma$ be a curve so that $\alpha(0)=p$ and $\alpha(1)=w^{0}p$.
Set $x_{m}:=P_{0}^{m}\alpha(0)$ and recall that $P_{0}^{m}\alpha(1)=w^{0}x_{m}.$
Proposition \ref{proposition-interecao-encurtamento-orbifold} implies that there exists a sequence  $\{k_{n}\}\subset W$  and a subsequence  $\{\gamma_{n}\}$ of $P_{0}^{m}\alpha$ such that 
$k_{n}\gamma_{n}$ converges uniformly to a (possibly trivial) closed geodesic $\gamma$ of $\Sigma/W.$ Set $x:=\gamma(0)$ and $y:=\gamma(1)$ and note that 
\begin{equation}
\label{eq-2-theorem-ClosedGeodesicGoodOrbifold}
k_{n}x_{n}\rightarrow x ; \ \, k_{n}w^{0}x_{n}\rightarrow y .
\end{equation}

Suppose  that the closed geodesic $\gamma$ is trivial, i.e., $x=y$.
Now set $g_{n}:=k_{n}(w^{0})^{-1}(k_{n})^{-1}\in W$.  Eq. (\ref{eq-2-theorem-ClosedGeodesicGoodOrbifold}) implies  
 \begin{equation}
 \label{eq-3-theorem-ClosedGeodesicGoodOrbifold} 
   g_{n}(k_{n}w^{0}x_{n})\rightarrow x ; \ \, (k_{n}w^{0}x_{n})\rightarrow x .
\end{equation}
Since the action of $W$ on $\Sigma$ is proper, Eq. (\ref{eq-3-theorem-ClosedGeodesicGoodOrbifold}) implies that there exists a subsequence $\{g_{n}\}$, 
such that $g_{n}\rightarrow g\in W_{x}$. Since $W$ is discrete,  there exists  $n\in\mathbb{N}$ such that $g_{n}=g\in W_{x}$ and hence
\begin{equation}
 \label{eq-4-theorem-ClosedGeodesicGoodOrbifold}
 w^{0}=(k_{n})^{-1}(g)^{-1}(k_{n})\in W_{(k_{n})^{-1}x  } .
\end{equation}
The above equation contradicts the assumption that $w^{0}$ does not fix  points. Therefore $\gamma$ is  a nontrivial closed geodesic.

\end{proof}





\bibliographystyle{amsplain}

\end{document}